\newtheorem{thm}{Theorem}[section]
\newtheorem{con}[thm]{Conjecture}
\newtheorem{lem}[thm]{Lemma}
\newtheorem{pro}[thm]{Proposition}
\numberwithin{equation}{section}
\journal{}
\begin{document}
\begin{spacing}{1.15}
\begin{frontmatter}
\title{\textbf{~High-ordered spectral characterizations of graphs\\ }}
\author{Lixiang Chen}
\author{Lizhu Sun}
\author{Changjiang Bu}\ead{buchangjiang@hrbeu.edu.cn}

\address{College of Mathematical Sciences, Harbin Engineering University, Harbin 150001, PR China}

\begin{abstract}
The spectrum of the $k$-power hypergraph of a graph $G$ is called the $k$-ordered spectrum of $G$.
If graphs $G_1$ and $G_2$ have same $k$-ordered spectrum for all positive integer $k\geq2$, $G_1$ and $G_2$ are said to be high-ordered cospectral.
If all graphs who are high-ordered cospectral with the graph $G$ are isomorphic to $G$, we say that $G$ is determined by the high-ordered spectrum.
In this paper, we use the high-ordered spectrum of graphs to study graph isomorphism and show that all Smith's graphs are determined by the high-ordered spectrum.
And we give infinitely many pairs of trees with same spectrum but different high-ordered spectrum by high-ordered cospectral invariants of trees,
it means that we can determine that these cospectral trees are not isomorphism by the high-ordered spectrum.
\end{abstract}

\begin{keyword} graph isomorphism, high-ordered spectrum, spectral characterization, cospectral invariants.\\
\emph{AMS classification(2020):}05C65, 05C50.
\end{keyword}
\end{frontmatter}

\section{Introduction}
If there was a ``Holy Grail'' in graph theory, it would be a practical test for graph isomorphism \cite{merris1994laplacian}.
In 1956, G\"{u}nthard and Primas used the spectrum of graphs to determine whether the graphs are isomorphic \cite{gunthard1956zusammenhang}.
In 1957, Collatz and Sinogowitz presented a pair of non-isomorphic cospectral tree \cite{von1957spektren}.
Subsequently,
many non-isomorphic \textit{cospectral} graphs (i.e., graphs having the same spectrum) were found \cite{cvetkovic1971graphs,godsil1982constructing,schwenk1973almost}.
For instance, two graphs shown in Figure \ref{tu1} are non-isomorphic cospectral graphs given by Cvetkovi\'{c} in 1971 \cite{cvetkovic1971graphs}.
This pair of graphs  is usually called the \textit{Saltire pair} because the two graphs superposed give the Scottish flag: Saltire  \cite{van2003graphs}.

\begin{figure}[H]
  \centering
 \subfloat[$G_1$]{
\begin{minipage}[t]{0.3\textwidth}
\centering
\includegraphics[scale=0.3]{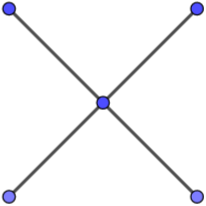}
\end{minipage}
} \subfloat[$G_2$]{
\begin{minipage}[t]{0.3\textwidth}
\centering
\includegraphics[scale=0.3]{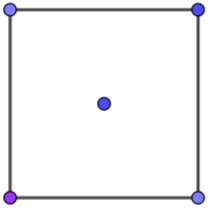}
\end{minipage}
}
 \caption{The Saltire pair}\label{tu1}
\end{figure}

If all graphs who are cospectral with a graph $G$ are isomorphic to $G$, the graph $G$ is said to be \textit{determined by the spectrum} (DS for short).
Until now, the known DS graphs are very special and most of the techniques involved in proving them to be DS cannot be applied to general graphs \cite{cvetkovic1980spectra,van2003graphs,van2009developments,Wang2017simple}.

In this paper, we use the high-ordered spectrum of graphs to study the high-ordered spectral characterizations of graphs.
Since two graphs are isomorphic if and only if their power hypergraphs are isomorphic and the spectrum of the power hypergraph contains more graph structure information, the high-ordered spectrum of graphs can determine more graphs than the spectrum of graphs.
For example, the Saltire pair can not be determined by the spectrum but can be determined by the high-ordered spectrum (shown in Section 3).

%
Let the $k$-power hypergraph $G^{(k)}$ be the $k$-uniform hypergraph that is obtained by adding $k-2$ new vertices to each edge of $G$.
The spectrum of  $G^{(k)}$ is called the \textit{$k$-ordered spectrum} of $G$.
If graphs $G_1$ and $G_2$ have same $k$-ordered spectrum for all positive integer $k\geq2$, $G_1$ and $G_2$ are said to be \emph{high-ordered cospectral}.
If all graphs who are high-ordered cospectral with the graph $G$ are isomorphic to $G$, we say that $G$ is \emph{determined by the high-ordered spectrum} (\textit{DHS} for short).
If graphs which have same $k$-ordered spectrum with the graph $G$ are isomorphic to $G$, we say that $G$ is \textit{determined by the $k$-ordered spectrum}.
Obviously, the graphs determined by the $k$-ordered spectrum are DHS.
The graphs determined by the spectrum ($2$-ordered spectrum) are DHS.

In 1970, Smith classified all connected graphs with spectral radius at most $2$ \cite{smith1970some}, which usually are called ``Smith's graphs".
In 2009, Van Dam and Haemers showed that not all Smith's graphs are determined by the spectrum  \cite{van2009developments}.
We show that all Smith's graphs are determined by the high-ordered spectrum in this paper.
In 1973, Schwenk gave a useful method to construct cospectral trees and proved his famous conclusion:``Almost all trees are not DS" \cite{schwenk1973almost}.
We show that the high-ordered spectrum of infinitely many pairs of cospectral trees constructed by Schwenk's method are different.




This paper is organized as follows.
In Section 2, we introduce  the spectrum of hypergraph and some lemmas.
In Section 3, we show that Smith's graphs are determined by the high-ordered spectrum.
In Section 4, we give some high-ordered cospectral invariants about the number of some subtrees.
As their applications, we give infinitely many pairs of cospectral trees with different high-ordered spectrum.
In Section 5, conclusions and future work are given.

\section{Preliminaries}

In this section, we introduce  the spectrum of hypergraph and some lemmas which are important to the subsequent results in the paper.
For a positive integer $n$, let $\left[ n \right] = \left\{ {1, \ldots ,n} \right\}$.
A $k$-order $n$-dimension complex tensor $T= \left( {{t_{{i_1} \cdots {i_k}}}} \right) $ is a multidimensional array with $n^k$ entries on complex number field $\mathbb{C}$, where ${i_j} \in \left[ n \right]$, $j = 1, \ldots ,k$.
Denote the set of $n$-dimension complex vectors by $\mathbb{C}^n$.
For $x = {\left( {{x_1}, \ldots ,{x_n}} \right)^{\rm{T}}} \in {\mathbb{C}^n}$, $T{x^{k - 1}}$ denotes a vector in $\mathbb{C}^{n}$ whose $i$-th component is
\[{\left( {T{x^{k - 1}}} \right)_i} = \sum\limits_{{i_2}, \ldots ,{i_k}=1}^n {{t_{i{i_2} \cdots {i_k}}}{x_{{i_2}}} \cdots {x_{{i_k}}}} .\]
If there exist $\lambda \in \mathbb{C}$ and a nonzero vector $x =(x_1,x_2,\ldots,x_n)^{\mathrm{T}}\in {\mathbb{C}^n}$ such that $T{x^{k - 1}} = \lambda {x^{\left[ {k - 1} \right]}}$, then $\lambda$ is called an \emph{eigenvalue} of $T$ and $x$ is an \emph{eigenvector} of $T$ corresponding to $\lambda$, where ${x^{\left[ {k - 1} \right]}} = {\left( {x_1^{k - 1}, \ldots ,x_n^{k - 1}} \right)^{\rm{T}}}$ \cite{lim2005singular,qi2005eigenvalues}.

A hypergraph $H=(V(H),E(H))$ is called \emph{$k$-uniform} if each edge of $H$ contains exactly $k$ vertices.
For a $k$-uniform hypergraph $H$ with $n$ vertices, its \emph{adjacency tensor} ${A}_H=(a_{i_1i_2\ldots i_k})$ is a $k$-order $n$-dimension tensor, where
\[{a_{{i_1}{i_2} \ldots {i_k}}} = \left\{ \begin{array}{l}
 \frac{1}{{\left( {k - 1} \right)!}},{\kern 37pt}\mathrm{ if}{\kern 2pt}{ \left\{ {{i_1},{i_2},\ldots ,{i_k}} \right\} \in {E(H)}}, \\
 0, {\kern 57pt}\mathrm{ otherwise}. \\
 \end{array} \right.\]
All the eigenvalues of ${A}_H$ are called the \textit{spectrum} of the hypergraph $H$ \cite{cooper2012spectra}.
When $k=2$, ${A}_H$ is the adjacency matrix of the graph $H$.

The $k$-ordered spectrum of graph $G$ is the spectrum of power hypergraph $G^{(k)}$.
In \cite{Zhou2014Some},  Zhou et al. show that eigenvalues of $G^{(k)}$ can be obtained from the eigenvalues of $G$.
In \cite{cardoso2020spectrum}, the authors gave all the eigenvalues of $G^{(k)}$ without counting multiplicity as following.
\begin{lem}\cite{cardoso2020spectrum}\label{yinli1}
Let $G^{(k)}$ be the $k$-power hypergraph of a graph $G$.
\\
(1) When $k=3$, $\lambda$ is an eigenvalue of $G^{(3)}$  if and only if there is an induced subgraph with an eigenvalue $\beta$ such that $\beta^2=\lambda^k$.\\
(2) When $k>3$, $\lambda$ is an eigenvalue of $G^{(k)}$ if and only if there is a subgraph with an eigenvalue $\beta$ such that $\beta^2=\lambda^k$.
\end{lem}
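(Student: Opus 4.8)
The plan is to analyze the tensor eigen-equation $A_{G^{(k)}} z^{k-1} = \lambda z^{[k-1]}$ directly, splitting the eigenvector $z$ into its values $x_u$ on the original vertices of $G$ and its values $y$ on the $k-2$ subdivision vertices inserted on each edge. First I would write down the two families of scalar equations: at a subdivision vertex $w_j$ on an edge $e=\{u,v\}$ one gets $\lambda y_{w_j}^{k-1} = x_u x_v \prod_{l\neq j} y_{w_l}$, while at an original vertex $u$ one gets $\lambda x_u^{k-1} = \sum_{v\sim u} x_v \prod_{j=1}^{k-2} y_{w_j^{uv}}$. The first family lets me eliminate the subdivision variables: on any edge whose subdivision values are nonzero and taken equal to a common value $y_e$, the $k-2$ equations collapse to $\lambda y_e^2 = x_u x_v$, i.e. $y_e^2 = x_u x_v/\lambda$ when $\lambda\neq 0$.

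For the forward direction I would let $S=\{u: x_u\neq 0\}$ be the support on the original vertices, show $S\neq\emptyset$ (otherwise the subdivision equations force $z=0$), and observe that every edge with an endpoint outside $S$ is inactive. The active edges inside $S$ then span a subgraph $H$, and substituting $y_e^2 = x_ux_v/\lambda$ into the original-vertex equations, followed by the substitution $g_u = x_u^{k/2}$, collapses them to $\lambda^{k/2} g_u = \sum_{v\in H,\,v\sim u} g_v$; thus $g$ is an eigenvector of $A_H$ with eigenvalue $\beta=\lambda^{k/2}$, giving $\beta^2=\lambda^k$. The squaring in $\beta^2=\lambda^k$ is exactly what absorbs the two-valued square roots and $k$-th-root-of-unity ambiguities introduced by the fractional powers, which is why the clean statement reads $\beta^2=\lambda^k$ rather than $\beta=\lambda^{k/2}$.

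The induced-versus-general dichotomy I would pin down through the subdivision equation on an edge $e=\{u,v\}$ with $u,v\in S$. When $k=3$ there is a single subdivision vertex and its equation reads $\lambda y_e^2 = x_u x_v \neq 0$, so $y_e$ cannot vanish: every edge of $G$ inside $S$ is automatically active, forcing $H=G[S]$ to be induced. When $k\geq 4$ there are at least two subdivision vertices per edge, and setting all of them to zero is consistent (the product on the right-hand side then contains a zero factor), so such an edge may be switched off; hence $H$ ranges over arbitrary, not necessarily induced, subgraphs. The backward direction reverses the construction: given a subgraph $H$ (induced if $k=3$) with $A_H f=\beta f$ and $\beta^2=\lambda^k$, I set $x_u=f_u^{2/k}$ on $V(H)$ and zero elsewhere, $y_e=(x_ux_v/\lambda)^{1/2}$ on the subdivision vertices of edges of $H$ and zero elsewhere, and verify both families of equations; for $k=3$ the induced hypothesis is precisely what prevents the contradiction of an $H$-absent edge being forced active in $G^{(3)}$.

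The main obstacle I expect is the consistent choice of branches for the fractional powers $x_u=f_u^{2/k}$ and the square roots $y_e=(x_ux_v/\lambda)^{1/2}$ across the whole (sub)graph, together with the degenerate cases $\lambda=0$ and eigenvectors with additional vanishing components. These require care to check that the root-of-unity freedom can always be allocated so that all scalar equations hold simultaneously, and that the relation indeed closes up exactly to $\beta^2=\lambda^k$.
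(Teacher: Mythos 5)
First, note that the paper does not prove this statement: Lemma~\ref{yinli1} is imported verbatim from \cite{cardoso2020spectrum}, so there is no in-paper argument to compare yours against. Your overall strategy --- splitting the eigen-equation of $A_{G^{(k)}}$ over original and subdivision vertices, eliminating the subdivision variables, and explaining the $k=3$ versus $k>3$ dichotomy by whether the product $\prod_{l\neq j}y_{w_l}$ can be made to vanish --- is the standard route, and your treatment of the backward direction and of the ``induced'' distinction is essentially right in outline.

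The gap is in the forward direction for $k\geq 4$, at the step where you ``collapse'' the original-vertex equations to $\lambda^{k/2}g_u=\sum_{v\sim u}g_v$. On an active edge $e=\{u,v\}$ the subdivision equations force only $y_{w_1}^k=\cdots=y_{w_{k-2}}^k$, not equality of the $y_{w_j}$ themselves; hence the quantity that actually enters the vertex equation, $P_e=\prod_j y_{w_j}$, is pinned down only up to sign by $P_e^2=(x_ux_v/\lambda)^{k-2}$, and both signs are genuinely attainable. Writing $x_ux_vP_e=\varepsilon_{uv}\,g_ug_v\,\mu$ with $g_u^2=x_u^k$, $\mu^2=\lambda^{2-k}$ and $\varepsilon_{uv}\in\{\pm1\}$, the vertex equations give $(\lambda/\mu)\,g_u=\sum_v\varepsilon_{uv}g_v$: that is, $g$ is an eigenvector of a \emph{signed} adjacency matrix of the active subgraph $H$. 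The sign freedom in choosing the $g_u$ only changes $\varepsilon$ by switching, and the product of $\varepsilon$ over a cycle of $H$ is a switching invariant that need not equal $+1$; so the final squaring $\beta^2=\lambda^k$ does \emph{not} absorb this ambiguity --- it changes which matrix $g$ is an eigenvector of, not merely which square root $\beta$ is taken. To close the argument you must either show that the signing arising from an actual eigenvector is always balanced, or show that every eigenvalue of a signed subgraph of $G$ is, up to sign, an eigenvalue of an unsigned subgraph of $G$; neither is addressed in your sketch, and neither is obvious (for instance $K_4$ with one negative edge has eigenvalue $\sqrt{5}$, which no unsigned subgraph of $K_4$ has). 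The issue disappears when $H$ is a forest, and for a cycle the unbalanced signing's eigenvalues are, up to sign, eigenvalues of paths, so the applications in this paper (Smith's graphs and trees) are unaffected; but as a proof of the lemma for general $G$ the argument is incomplete at exactly this point.
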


In \cite{chen2021reduction}, the authors gave the characteristic polynomial and then give all distinct eigenvalues of power hyperpaths.
\begin{lem}\cite{chen2021reduction}\label{yinlichen}
All distinct elements in the set
$$ \left\{{\left( {2\cos \frac{\pi }{{j+1 }}t} \right)^{\frac{2}{k}}}{e^{\mathbf{i}\frac{{2\pi}}{k}\theta }}:j\in[n],t\in[j],\theta\in[k]\right\}$$
are all distinct eigenvalues of the $k$-uniform hyperpath $P^{(k)}_n$, where $\mathbf{i}=\sqrt{-1}$.
\end{lem}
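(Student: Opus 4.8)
The plan is to specialize Lemma~\ref{yinli1} to the path $G=P_n$ (with $n$ vertices) and then solve the resulting scalar equation for $\lambda$; this handles all $k\ge 3$, which is the range covered by Lemma~\ref{yinli1}. First I would pin down which numbers $\beta$ can appear as eigenvalues of subgraphs of $P_n$. Every subgraph of a path is a disjoint union of subpaths, so its spectrum is the union of the spectra of those subpaths, and the eigenvalues of the subpath $P_j$ on $j$ vertices are the classical values $2\cos\frac{\pi t}{j+1}$ with $t\in[j]$. Hence the collection of all such $\beta$ is exactly
\[
 \mathcal{B}=\left\{2\cos\frac{\pi t}{j+1}:j\in[n],\,t\in[j]\right\}.
\]

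Next I would apply Lemma~\ref{yinli1}: $\lambda$ is an eigenvalue of $P_n^{(k)}$ if and only if $\beta^2=\lambda^k$ for some $\beta\in\mathcal{B}$. Fixing $\beta=2\cos\frac{\pi t}{j+1}$, the number $\beta^2$ is a nonnegative real whose $k$ complex $k$-th roots are
\[
 \lambda=(\beta^2)^{1/k}\,e^{\mathbf{i}\frac{2\pi}{k}\theta}=\left(2\cos\frac{\pi t}{j+1}\right)^{\frac{2}{k}}e^{\mathbf{i}\frac{2\pi}{k}\theta},\qquad \theta\in[k],
\]
where the real power is read as the nonnegative $k$-th root of $\beta^2$. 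Letting $(j,t,\theta)$ range over $[n]\times[j]\times[k]$ and then discarding repetitions produces exactly the distinct eigenvalues of $P_n^{(k)}$, which is the displayed set; since Lemma~\ref{yinli1} is an equivalence, no eigenvalue is missed and none is spurious.

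The hard part is entirely bookkeeping rather than depth. First, I would check that the induced-versus-arbitrary-subgraph distinction in Lemma~\ref{yinli1} (induced for $k=3$, arbitrary for $k>3$) does not change $\mathcal{B}$ for a path: taking a block of $j$ consecutive vertices realizes $P_j$ as an induced subpath for every $j\le n$, so the same set $\mathcal{B}$ arises in both cases and the argument is uniform in $k\ge 3$. Second, I would address the meaning of $\left(2\cos\frac{\pi t}{j+1}\right)^{2/k}$ when the cosine is negative: only $\beta^2$ enters the equation, so the power must be understood as $|\beta|^{2/k}$, and the negative values of $\beta$ (arising from the symmetry $t\mapsto j+1-t$) merely reproduce $k$-th roots already obtained and are absorbed upon passing to distinct elements. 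With these two points settled, the set equality follows directly from the equivalence in Lemma~\ref{yinli1}.
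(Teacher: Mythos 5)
Your argument is correct, but note that the paper does not actually prove this lemma: it is imported from \cite{chen2021reduction}, where it is obtained by computing the characteristic polynomial of the power hyperpath and reading off its roots. Your route instead specializes Lemma \ref{yinli1} to $G=P_n$, using the facts that every (induced) subgraph of a path is a disjoint union of induced subpaths and that $P_j$ has spectrum $\left\{2\cos\frac{\pi t}{j+1}:t\in[j]\right\}$; this is precisely the style of argument the paper itself uses to derive Lemma \ref{dingliben} for cycles from the present lemma, so your derivation fits naturally into the paper's toolkit. The two bookkeeping points you flag are the right ones: the induced-versus-arbitrary distinction is vacuous for paths since every $P_j$ with $j\le n$ occurs as an induced subpath, and the fractional power must be read as $(\beta^2)^{1/k}=|\beta|^{2/k}$ so that negative $\beta$ contributes nothing new. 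What each approach buys: yours is shorter and self-contained, but it yields only the set of distinct eigenvalues and only for $k\ge 3$ (for $k=2$ the displayed set is strictly larger than the spectrum of $P_n$, so this restriction is inherent in the statement rather than a defect of your proof), whereas the characteristic-polynomial route of \cite{chen2021reduction} additionally carries multiplicity information, which the lemma as stated does not need.
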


Since the connected subgraphs of the cycle $C_n$ are $C_n$ and $P_j$ for $j \in [n-1]$, we obtain all distinct eigenvalues of $C^{(k)}_n$ from all eigenvalues of $C_n$ and $P_j$ by Lemma \ref{yinli1}.
\begin{lem}\label{dingliben}
All distinct elements in the set
$$\left\{{\left( {2\cos \frac{\pi }{{j+1 }}t} \right)^{\frac{2}{k}}}{e^{\mathbf{i}\frac{{2\pi}}{k}\theta }}, {\left( {2\cos \frac{2\pi }{{n }}r} \right)^{\frac{2}{k}}}{e^{\mathbf{i}\frac{{2\pi}}{k}\theta }} :j\in[n-1],t\in[j],r \in [n],\theta\in[k]\right\} $$
are all distinct eigenvalues of the $k$-power hypercycle $C^{(k)}_n$ (for $k>3$), where $\mathbf{i}=\sqrt{-1}$.
\end{lem}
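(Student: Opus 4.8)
The plan is to deduce the spectrum of $C^{(k)}_n$ from Lemma \ref{yinli1}(2). For $k>3$ that lemma says a complex number $\lambda$ is an eigenvalue of $C^{(k)}_n$ if and only if some subgraph $H$ of $C_n$ has an adjacency eigenvalue $\beta$ with $\beta^2=\lambda^k$. Thus the computation separates into two independent parts: (i) collect every $\beta$ that occurs as an eigenvalue of a subgraph of $C_n$, and (ii) for each such $\beta$ solve $\beta^2=\lambda^k$ in $\mathbb{C}$ and record the solutions $\lambda$. The union of the solution sets, with repetitions discarded, will be the desired list of distinct eigenvalues.

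First I would reduce part (i) to connected subgraphs. The adjacency spectrum of any graph is the union of the spectra of its connected components, so the set of attainable $\beta$ equals the union of the spectra of the connected subgraphs of $C_n$. As recorded just before the statement, these are the full cycle $C_n$ and the subpaths $P_j$ with $j\in[n-1]$. Their spectra are the classical ones: $2\cos\frac{2\pi}{n}r$ with $r\in[n]$ for $C_n$, and $2\cos\frac{\pi}{j+1}t$ with $t\in[j]$ for $P_j$. These are exactly the two families of real numbers $\beta$ appearing inside the displayed set.

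For part (ii) observe that $\beta^2$ is a nonnegative real, so $\lambda^k=\beta^2$ has precisely the $k$ roots $\lambda=|\beta|^{2/k}e^{\mathbf{i}\frac{2\pi}{k}\theta}$, $\theta\in[k]$, where $|\beta|^{2/k}$ denotes the nonnegative real $k$-th root of $\beta^2$. Substituting the two families of $\beta$ from part (i) produces exactly the factors $\left(2\cos\frac{\pi}{j+1}t\right)^{\frac2k}$ and $\left(2\cos\frac{2\pi}{n}r\right)^{\frac2k}$ multiplied by $e^{\mathbf{i}\frac{2\pi}{k}\theta}$ that constitute the displayed set. Taking the union over all admissible $\beta$ and $\theta$ and deleting duplicates then gives precisely the asserted set of all distinct eigenvalues of $C^{(k)}_n$.

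I expect the delicate point to be the enumeration in part (i) rather than the arithmetic in part (ii). One must justify restricting to the connected subgraphs listed above --- in effect to the connected induced subgraphs $C_n$ and $P_j$ ($j\le n-1$), an arc of consecutive vertices giving each $P_j$ --- and confirm that no further connected subgraph contributes a new eigenvalue $\beta$; this is the reading under which the enumeration from before the statement is complete. The remaining care is purely notational: the symbol $\left(2\cos(\cdots)\right)^{2/k}$ must be read as the nonnegative real number $|2\cos(\cdots)|^{2/k}$, so that negative cosines and the zero value (coming from the central eigenvalue of the odd paths, where $t=\frac{j+1}{2}$) are absorbed correctly, with the phase $e^{\mathbf{i}\frac{2\pi}{k}\theta}$ supplying all signs and rotations. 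Once these conventions are fixed, checking that the displayed set equals the image of $\beta\mapsto\{\,|\beta|^{2/k}e^{\mathbf{i}\frac{2\pi}{k}\theta}:\theta\in[k]\,\}$ is routine.
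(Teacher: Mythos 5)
Your proposal is correct and follows essentially the same route as the paper: apply Lemma \ref{yinli1}(2), reduce to the connected subgraphs $C_n$ and $P_j$ ($j\in[n-1]$), take their classical spectra, and solve $\beta^2=\lambda^k$. The only cosmetic difference is that the paper imports the path contribution wholesale from Lemma \ref{yinlichen} rather than recomputing it from the eigenvalues of $P_j$, which amounts to the same calculation.
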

\begin{proof}
The connected subgraphs of cycle $C_n$ are $C_n$ and $P_j$ for $j \in [n-1]$.
By Lemma \ref{yinli1} and \ref{yinlichen}, we know that  all distinct numbers in the set $$\left\{{\left( {2\cos \frac{\pi }{{j+1 }}t} \right)^{\frac{2}{k}}}{e^{\mathbf{i}\frac{{2\pi}}{k}\theta }}:j\in[n-1],t\in[j],\theta\in[k]\right\} \bigcup \left\{{\lambda ^{\frac{2}{k}}}{e^{\mathbf{i}\frac{{2\pi}}{k}\theta }}: \lambda \in \sigma(C_n), \theta\in[k]\right\} $$
are all distinct eigenvalues of $C^{(k)}_n$,
where $\sigma(C_n)$ is the set of all eigenvalues of $C_n$.
Since $\sigma(C_n)=\{2\cos{\frac{2\pi r}{n}} :r=1,2,\ldots,n\}$ ( Page 72 in \cite{cvetkovic1980spectra}),
we know that all distinct numbers in the set
$$\left\{{\left( {2\cos \frac{\pi }{{j+1 }}t} \right)^{\frac{2}{k}}}{e^{\mathbf{i}\frac{{2\pi}}{k}\theta }}, {\left( {2\cos \frac{2\pi }{{n }}r} \right)^{\frac{2}{k}}}{e^{\mathbf{i}\frac{{2\pi}}{k}\theta }} :j\in[n-1],t\in[j],r \in [n],\theta\in[k]\right\} $$
are all distinct eigenvalues of $C^{(k)}_n$.
\end{proof}

The \textit{$d$-th order spectral moment} $\mathrm{S}_d(G)$ of a graph $G$ is the sum of $d$-th power of all eigenvalues of $G$.
Two graphs are cospectral if and only if their $d$-th order spectral moments are equal  \cite{van2009developments}.
The spectral moment of graph is an important parameter in the topic of graph spectral characterizations \cite{2008The,cvetkovic1987spectra,lepovic2002no,van2003graphs}.
Let $\widehat{G}$ be a connected subgraph of $G$.
We use $N_{G}(\widehat{G})$ to denote the number of subgraphs of $G$ isomorphic to $\widehat{G}$,
$c_d(\widehat{G})$ to denote the number of closed walks with length $d$ in the graph $\widehat{G}$ running through all the edges at least once.
The $d$-th order spectral moment $\mathrm{S}_d(G)$ can be represented as a linear combination of the number of connected subgraphs \cite{2008The,cvetkovic1987spectra,cvetkovic1980spectra}, i.e.,
${\mathrm{S}_d}(G)= \sum\nolimits_{\widehat{G} \in G(d)} {{c_d}(\widehat{G}){N_G}( \widehat{G} )}$,
where $G(d)$ is the set of connected subgraphs of $G$ with at most $d$ edges.
The coefficient $c_d(\widehat{G})$ is called the \textit{$d$-th order spectral moment coefficient of $\widehat{G}$}.
In \cite{Chen2020Spectral}, the authors gave a formula for the $d$-th order spectral moment coefficients of trees.

\begin{lem}\cite{Chen2020Spectral}\label{yinlixs}
The  $d$-th order  spectral moment of the tree $T$ is
\[{\mathrm{S}_d}\left( T \right) = \left\{ \begin{array}{l}
\sum\limits_{m=1}^{{\frac{d}{2}}} {{\sum\limits_{\widehat{T} \in {\mathbf{T}({m})}} {c_{d}(\widehat{T})} } N_{T}(\widehat{T})},{\kern16pt}2\mid d, \\
0,{\kern120pt} 2 \nmid d, \\
 \end{array} \right.\]
where ${\mathbf{T}({m})}$ is the set of subtrees of $T$ with $m$ edges.
The $d$-th order spectral moment coefficient of the  subtree $\widehat{T}$ is
\begin{align}\label{pjxsgs}
c_{d}({\widehat{T}}) = \left\{ \begin{array}{l}
{d}{\sum\limits_{{\sum_{e \in E({\widehat{T}})}}w(e) = \frac{d}{2}}}\left({\prod\limits_{e \in E({\widehat{T}})}}w{(e)}{\prod _{v \in V({\widehat{T}})}}\frac{{({d_v} - 1)!}}{{{r_v}}}\right),{\kern16pt}2\mid d, \\
0,{\kern228pt} 2 \nmid d, \\
 \end{array} \right.
\end{align}
where $w(e)$ is a positive integer weight of $e \in E(\widehat{T})$, ${d_v} = {\sum _{ e \in E_v(\widehat{{T}})}}{w(e)}$, ${r_v} = \prod\nolimits_{{ e \in E_v(\widehat{{T}})} } {{w(e)}!}$.
\end{lem}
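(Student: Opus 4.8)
The plan is to turn $\mathrm{S}_d(T)$ into a count of closed walks and then evaluate the resulting coefficient as an Eulerian-circuit count on a tree.

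First I would use the identity $\mathrm{S}_d(T)=\operatorname{tr}(A_T^{d})$, so that $\mathrm{S}_d(T)$ is the number of closed walks of length $d$ in $T$. Since a tree is bipartite, its spectrum is symmetric about $0$; equivalently there is no closed walk of odd length. This gives $\mathrm{S}_d(T)=0$ when $2\nmid d$ and accounts for the case distinction. For even $d$ I would classify the closed walks by the set of edges they actually traverse. The edges used by a single closed walk form a connected subgraph, and a connected subgraph of a tree is again a subtree $\widehat{T}$. Grouping the walks by the isomorphism type of $\widehat{T}$ and letting $N_T(\widehat{T})$ count the embeddings yields $\mathrm{S}_d(T)=\sum_{\widehat{T}}N_T(\widehat{T})\,c_d(\widehat{T})$, where $c_d(\widehat{T})$ is exactly the number of closed walks of length $d$ in $\widehat{T}$ that run through every edge at least once.

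Next I would pin down the admissible subtrees and set up the edge weights. Because $\widehat{T}$ is acyclic, any closed walk must traverse each edge an even number of times; writing this number as $2w(e)$ with $w(e)\ge 1$ and summing lengths gives $\sum_{e}w(e)=d/2$, so only subtrees with $m\le d/2$ edges contribute, which produces the outer sum over $m$ from $1$ to $d/2$. For a fixed weight profile $w$ I would count the walks using each edge exactly $2w(e)$ times. A crossing argument on the tree (deleting $e$ separates $\widehat{T}$ into two parts, and a closed walk must cross $e$ equally often in each direction) shows $e$ is used $w(e)$ times in each orientation. Replacing each edge by $w(e)$ arcs in each direction gives a balanced directed multigraph $D$ whose Eulerian circuits are precisely the desired walks.

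The main work, and the main obstacle, is the exact evaluation of this Eulerian-circuit count. I would invoke the BEST theorem: the number of (cyclic) Eulerian circuits of $D$ with distinguishable arcs equals $t_w(D)\prod_{v}(\deg^{+}(v)-1)!$. Here $\deg^{+}(v)=d_v$, and because the underlying graph is a tree, every spanning arborescence toward a fixed root uses, at each non-root vertex, the unique rootward edge; choosing one of its $w(e)$ parallel arcs gives $t_w(D)=\prod_{e}w(e)$, independent of the root. Turning this distinguishable-arc count into genuine walks requires two bookkeeping steps that I expect to be the delicate points: dividing by $\prod_{e}(w(e)!)^2=\prod_v r_v$ to render the parallel arcs indistinguishable (checking that the relabelling action is free, so the quotient is exact), and multiplying by $d$ to pass from cyclic circuits to rooted closed walks as counted by the trace. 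Assembling these factors gives the contribution $d\,\prod_e w(e)\prod_v \frac{(d_v-1)!}{r_v}$ of the profile $w$, and summing over all admissible profiles with $\sum_e w(e)=d/2$ yields the stated formula for $c_d(\widehat{T})$. A small check on $P_3$ with $d=4$ (where the single-edge and $P_3$ subtrees contribute $2\cdot 2$ and $4\cdot 1$, totalling $\mathrm{S}_4=8=(\sqrt2)^4+0+(-\sqrt2)^4$) confirms that the rooting factor $d$ and the division by $\prod_v r_v$ are normalized correctly.
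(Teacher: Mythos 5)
The paper itself offers no proof of this lemma; it is imported verbatim from the cited reference, so there is no in-paper argument to compare against. Judged on its own, your derivation is correct and complete in outline: the reduction of $\mathrm{S}_d(T)=\operatorname{tr}(A_T^d)$ to closed-walk counts, the vanishing for odd $d$ by bipartiteness, the grouping by the subtree of traversed edges, the parity/crossing argument forcing each edge to be used $2w(e)$ times with $w(e)\ge 1$, the identification $t_w(D)=\prod_e w(e)$ for the doubled tree, and the identity $\prod_e (w(e)!)^2=\prod_v r_v$ all check out, and your $P_3$, $d=4$ verification is accurate.

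One point deserves more care than your phrasing suggests: the order in which you apply the factor $d$ and the quotient by $\prod_e(w(e)!)^2$. You should multiply by $d$ while the arcs are still distinguishable — there the passage from cyclic Eulerian circuits to anchored arc sequences is exactly a factor of $d$, because an Eulerian circuit on distinguishable arcs has no nontrivial rotational symmetry — and only then quotient by the (free) relabelling action on anchored sequences. If one instead first passes to unlabelled closed walks and then tries to multiply cyclic classes by $d$, the count fails, since closed walks in a tree can have genuine rotational symmetries (e.g.\ $u,v,u,v,u$ for $d=4$). Your text lists the two bookkeeping steps without fixing this order; with that order made explicit, the argument is airtight and yields precisely the stated formula $c_d(\widehat{T})=d\sum_{w}\prod_e w(e)\prod_v (d_v-1)!/r_v$.
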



The $d$-th order spectral moment $\mathrm{S}_d(H)$ of a $k$-uniform hypergraph $H$ is the sum of $d$-th power of all eigenvalues of $H$, i.e., $\mathrm{S}_d(H)=\sum_{\lambda \in \sigma(H)}{\lambda^{d}}$, where $\sigma(H)$ is the spectrum of $H$.
Two hypergraphs are cospectral if and only if their $d$-th order spectral moments are equal for all $d\geq1$ \cite{clark2021harary}.
Clark and Cooper expressed the characteristic polynomial coefficients of a uniform hypergraph $H$ by the spectral moments of $H$ and gave the
``Harary-Sachs Theorem" of hypergraphs \cite{clark2021harary}.
In \cite{Chen2020Spectral}, the authors expressed the spectral moment of power hypertree by the number of subtrees as following.
\begin{lem}\cite{Chen2020Spectral}\label{yinlimcs}
Let $T^{(k)}$ be the $k$-power hypertree of a tree $T$. Let $c_{i}({\widehat{T}})$ denote the $i$-th order spectral moment coefficient of the subtree ${\widehat{T}}$.
Then the $d$-th spectral moment of $T^{(k)}$ is
\begin{align*}
{\mathrm{S}_d}\left( T^{(k)}\right) = \left\{ \begin{array}{l}
\sum\limits_{m=1}^{{\frac{d}{k}}}\frac{1}{2}{{(k - 1)}^{\left( {|E(T)| - m} \right)(k - 1)}}{k^{m(k - 2) + 1}} {\sum\limits_{\widehat{T} \in {\mathbf{T}(m)}}{c_{\frac{2d}{k}}(\widehat{T})} } N_{T}(\widehat{T}),{\kern6pt}k\mid d, \\
 0,{\kern280pt}k \nmid d ,\\
 \end{array} \right.
\end{align*}
 where $\mathbf{T}(m)$ is the set of subtrees of $T$ with $m$ edges.
\end{lem}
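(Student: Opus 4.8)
The plan is to compute the $d$-th spectral moment as the $d$-th order trace of the adjacency tensor, $\mathrm{S}_d(T^{(k)})=\mathrm{Tr}_d(A_{T^{(k)}})$, and then to reduce the resulting hypergraph walk-sum to closed walks on the underlying tree $T$. First I would dispose of the case $k\nmid d$. The spectrum of a power hypergraph is invariant under multiplication by any $k$-th root of unity $\zeta$ (with $\zeta^k=1$): this is already visible in Lemma \ref{yinli1} and in the explicit form $\beta^{2/k}e^{\mathbf{i}\frac{2\pi}{k}\theta}$ of the eigenvalues, and at the level of eigenvectors it comes from rescaling the coordinates of the $k-2$ added vertices according to the natural $\mathbb{Z}_k$-grading of $T^{(k)}$. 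Grouping the eigenvalues into orbits of size $k$ under $\lambda\mapsto\zeta\lambda$ and using $\sum_{\zeta^k=1}\zeta^d=0$ unless $k\mid d$ gives $\mathrm{S}_d(T^{(k)})=0$ whenever $k\nmid d$, which is the lower branch.

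For the case $k\mid d$, I would invoke the Harary--Sachs/trace machinery for uniform hypergraphs \cite{clark2021harary}, together with the structural reduction available for power hypergraphs \cite{chen2021reduction}: $\mathrm{Tr}_d(A_{T^{(k)}})$ expands as a weighted sum over the sub-hypergraphs of $T^{(k)}$ that support closed walks of total length $d$. Because $T$ is acyclic, every such closed walk projects to a closed walk on $T$ whose edge-support is a \emph{subtree} $\widehat{T}$; a walk that genuinely enters a hyperedge must traverse it and return, so each contributing walk runs through every edge of its support $\widehat{T}$, and the relation $\lambda^{d}=(\lambda^{k})^{d/k}=(\beta^{2})^{d/k}=\beta^{2d/k}$ forces the projected walk to have length $2d/k$. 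The number of such closed walks running through all edges of a fixed copy of $\widehat{T}$ is by definition the spectral moment coefficient $c_{2d/k}(\widehat{T})$ of Lemma \ref{yinlixs}, and the number of copies of $\widehat{T}$ inside $T$ is $N_T(\widehat{T})$; summing over the edge-number $m=|E(\widehat{T})|$ from $1$ to $d/k$ produces exactly the stated double sum $\sum_{m}\sum_{\widehat{T}\in\mathbf{T}(m)}c_{2d/k}(\widehat{T})\,N_T(\widehat{T})$.

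The main obstacle is the precise bookkeeping of the multiplicative weight attached to each subtree, i.e.\ proving that it equals $\tfrac12(k-1)^{(|E(T)|-m)(k-1)}k^{m(k-2)+1}$. This factor records three essentially independent contributions that I must separate cleanly: (i) the $k-2$ vertices added to each of the $m$ \emph{active} edges, each of which a walk may route through in $k$ admissible ways, giving $k^{m(k-2)}$; (ii) the $|E(T)|-m$ \emph{inactive} edges of $T$ lying outside the support, whose auxiliary vertices contribute the degeneracy factor $(k-1)^{(|E(T)|-m)(k-1)}$ familiar from the resultant computation in the power-hypergraph characteristic polynomial; and (iii) a global factor $k$ coming from the collapse $\sum_{j=0}^{k-1}\zeta^{jd}=k$ when $k\mid d$, together with the $\tfrac12$ produced by the two-to-one identification $\beta^2=\lambda^k$, which pairs the eigenvalues $\pm\beta$ of the (bipartite) subtree $\widehat{T}$.

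Verifying that these three weights appear with exactly these exponents, and that no cross terms or over-counting survive, is the delicate part of the argument. I would carry it out by isolating the local weight accrued by a single closed walk as it crosses one hyperedge, then multiplying the local weights over the active and inactive edges separately, and finally matching the surviving combinatorial sum against Lemma \ref{yinlixs} to confirm that the count of admissible projected walks is precisely $c_{2d/k}(\widehat{T})$ rather than a larger walk count that ignores some edge of the support.
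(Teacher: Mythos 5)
First, a point of reference: the paper does not prove this lemma at all --- it is quoted verbatim from \cite{Chen2020Spectral} --- so there is no in-paper argument to measure you against, and your proposal has to stand on its own as a derivation.

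Your lower branch ($k\nmid d$) is sound: the spectrum of $T^{(k)}$ is invariant under multiplication by any $k$-th root of unity (each hyperedge contains a degree-one added vertex, so the invariance is realized by a diagonal similarity, which also preserves multiplicities), and $\mathrm{S}_d=\zeta^d\mathrm{S}_d$ then forces $\mathrm{S}_d=0$. Your identification of the support of a contributing walk with a subtree $\widehat{T}$, and of the projected walk length with $2d/k$, is also essentially right, though the honest reason for the length is combinatorial rather than the eigenvalue identity you cite: in the trace expansion the degree-one added vertices force every hyperedge to appear with multiplicity divisible by $k$, so $d$ tensor entries project to a closed walk of length $2d/k$ on $T$ with edge weights $w(e)$, which is exactly what ties the count to $c_{2d/k}(\widehat{T})$ via Equation (\ref{pjxsgs}).

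The genuine gap is the weight $\tfrac12(k-1)^{(|E(T)|-m)(k-1)}k^{m(k-2)+1}$, which is the entire quantitative content of the lemma and which you rationalize rather than derive. Your three attributions mix two incompatible frameworks: the factors $\tfrac12$ (from pairing $\pm\beta$) and the global $k$ (from $\sum_j\zeta^{jd}=k$) belong to a computation that sums $\lambda^d$ over the explicit eigenvalues $\beta^{2/k}e^{\mathbf{i}2\pi\theta/k}$ --- but that route requires the algebraic multiplicities of the eigenvalues of $T^{(k)}$, which Lemma \ref{yinli1} does not supply --- whereas the factors $(k-1)^{(|E(T)|-m)(k-1)}$ and $k^{m(k-2)}$ belong to the trace/Veblen expansion, where the contribution of a vertex is governed by a factorial-laden local Eulerian count, not by ``$k$ admissible routings.'' You cannot take one factor from each picture and multiply; a single consistent derivation (most plausibly the $d$-th order trace formula applied to $A_{T^{(k)}}$, with the inactive-vertex degeneracy $(k-1)^{|V(T^{(k)})|-|V(\widehat{T}^{(k)})|}$ accounting for the exponent $(|E(T)|-m)(k-1)$) is needed, and you explicitly defer exactly this verification. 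As written, the proposal is a credible plan whose central step remains unproved.
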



\section{All Smith's graphs are DHS}
Connected graphs with spectral radius at most $2$, as shown as in Figure \ref{x1}, usually are called ``Smith's graphs", since Smith classified all connected graphs with spectral radius at most $2$ in 1970 \cite{smith1970some}.
In 2009, Van Dam and Haemers showed that not all Smith's graphs are determined by the spectrum  \cite{van2009developments}.
In this section, we show that all Smith's graphs are DHS.

Smith's graphs are determined by the spectrum except for the graphs $\widetilde{D}_n$ and $\widetilde{E}_6$ \cite{van2009developments},
then all Smith's graphs are DHS if and only if  $\widetilde{D}_n$ and $\widetilde{E}_6$ are DHS.
In order to prove that  $\widetilde{D}_n$ and $\widetilde{E}_6$ are DHS, we only need to give all non-isomorphic cospectral graphs of $\widetilde{D}_n$ (resp. $\widetilde{E}_6$) and then prove that these cospectral graphs are not high-ordered cospectral with $\widetilde{D}_n$ (resp. $\widetilde{E}_6$).


\begin{figure}[H]
\centering
\subfloat[$P_n$]{
\begin{minipage}[t]{0.20\textwidth}
\centering
\includegraphics[scale=0.3]{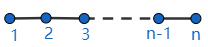}
\end{minipage}
}
\subfloat[$D_n$]{
\begin{minipage}[t]{0.20\textwidth}
\centering
\includegraphics[scale=0.3]{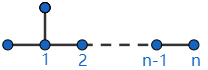}
\end{minipage}
}
\subfloat[$E_6$]{
\begin{minipage}[t]{0.20\textwidth}
\centering
\includegraphics[scale=0.3]{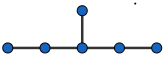}
\end{minipage}
}
\subfloat[$E_7$]{
\begin{minipage}[t]{0.20\textwidth}
\centering
\includegraphics[scale=0.3]{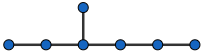}
\end{minipage}
}\\
\subfloat[$E_8$]{
\begin{minipage}[t]{0.20\textwidth}
\centering
\includegraphics[scale=0.3]{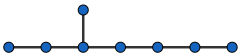}
\end{minipage}
}
\subfloat[$C_n$]{

\begin{minipage}[t]{0.20\textwidth}
\centering
\includegraphics[scale=0.3]{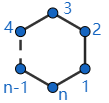}
\end{minipage}
}
\subfloat[$\widetilde{D}_n$]{
\begin{minipage}[t]{0.20\textwidth}
\centering
\includegraphics[scale=0.3]{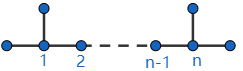}
\end{minipage}
}
\subfloat[$\widetilde{E}_6$]{
\begin{minipage}[t]{0.25\textwidth}
\centering
\includegraphics[scale=0.3]{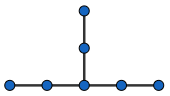}
\end{minipage}
}\\
\subfloat[$\widetilde{E}_7$]{
\begin{minipage}[t]{0.25\textwidth}
\centering
\includegraphics[scale=0.3]{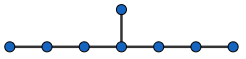}
\end{minipage}
}
\subfloat[$\widetilde{E}_8$]{
\begin{minipage}[t]{0.40\textwidth}
\centering
\includegraphics[scale=0.3]{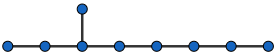}
\end{minipage}
}
\caption{Smith's graphs}\label{x1}
\end{figure}

We use  cospectral invariants of graphs to give all non-isomorphic cospectral graphs of graph $\widetilde{D}_n$ (resp. $\widetilde{E}_6$).
In \cite{cvetkovic1987spectra}, Cvetkovi\'{c} and Rowlinson gave the following cospectral invariant of graphs about the number of subgraphs.

\begin{lem}\cite{cvetkovic1987spectra}\label{yinli2}
If graphs $G$ and $G^*$ are cospectral, then $N_{P_3}(G)+2N_{C_4}(G)=N_{P_3}(G^*)+2N_{C_4}(G^*)$.
\end{lem}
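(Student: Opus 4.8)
The plan is to reduce the stated identity to equalities between low-order spectral moments, which hold automatically for cospectral graphs. Recall that the $d$-th spectral moment satisfies $\mathrm{S}_d(G) = \sum_i \lambda_i^d = \mathrm{tr}(A_G^d)$, i.e. it counts the closed walks of length $d$ in $G$. Since $G$ and $G^*$ are cospectral they have the same eigenvalues, hence $\mathrm{S}_d(G) = \mathrm{S}_d(G^*)$ for every $d$; in particular $\mathrm{S}_2(G) = \mathrm{S}_2(G^*)$ and $\mathrm{S}_4(G) = \mathrm{S}_4(G^*)$. It therefore suffices to exhibit $N_{P_3}(G) + 2N_{C_4}(G)$ as one fixed linear combination of $\mathrm{S}_2(G)$ and $\mathrm{S}_4(G)$.

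First I would evaluate $\mathrm{S}_2$ and $\mathrm{S}_4$ combinatorially through the subgraph expansion $\mathrm{S}_d(G) = \sum_{\widehat{G} \in G(d)} c_d(\widehat{G}) N_G(\widehat{G})$ recalled above, specialised to $d = 2,4$. Length-$2$ closed walks only run along single edges, giving $\mathrm{S}_2 = 2m$ with $m = |E(G)|$. For $d = 4$ the key observation is that the only connected subgraphs admitting a closed walk of length $4$ through all of their edges are the single edge, the cherry $P_3$, and the $4$-cycle $C_4$: a closed walk of length $4$ cannot cover three distinct edges, so a triangle, a path $P_4$, or a star $K_{1,3}$ contribute nothing. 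Classifying the closed walks $v_0 v_1 v_2 v_3 v_0$ by the set of edges they traverse then leaves exactly these three shapes.

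Next I would compute the coefficients $c_4(\widehat{G})$ by direct enumeration: single edges contribute $2m$ in total, each cherry $a$-$b$-$c$ contributes $c_4(P_3) = 4$ (the walks $abcba$, $cbabc$, $babcb$ and $bcbab$), and each $4$-cycle contributes $c_4(C_4) = 8$ (four starting vertices times two orientations). Hence $\mathrm{S}_4 = 2m + 4N_{P_3} + 8N_{C_4}$, and subtracting $\mathrm{S}_2 = 2m$ gives $\mathrm{S}_4 - \mathrm{S}_2 = 4\bigl(N_{P_3} + 2N_{C_4}\bigr)$, that is, $N_{P_3}(G) + 2N_{C_4}(G) = \tfrac14\bigl(\mathrm{S}_4(G) - \mathrm{S}_2(G)\bigr)$. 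Applying the same identity to $G^*$ and invoking $\mathrm{S}_2(G) = \mathrm{S}_2(G^*)$ and $\mathrm{S}_4(G) = \mathrm{S}_4(G^*)$ then yields the claim.

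The only delicate point is the length-$4$ bookkeeping: one must justify that no further connected subgraph can be covered by a closed walk of length $4$ (the exclusion of triangles and $P_4$ being the least obvious part) and must tally the multiplicities $c_4$ without mishandling the degenerate walks that backtrack along a single edge. Once this accounting is pinned down, the remainder is an elementary linear computation, so I expect the closed-walk classification to be the main obstacle.
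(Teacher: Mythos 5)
Your proof is correct: the identities $\mathrm{S}_2 = 2m$ and $\mathrm{S}_4 = 2m + 4N_{P_3} + 8N_{C_4}$ (with the coefficients $c_4(P_3)=4$, $c_4(C_4)=8$ and the exclusion of triangles, $P_4$ and $K_{1,3}$ all checked correctly) immediately give $N_{P_3}+2N_{C_4} = \tfrac14(\mathrm{S}_4-\mathrm{S}_2)$ as a cospectral invariant. The paper only cites this lemma from Cvetkovi\'{c} and Rowlinson without reproducing a proof, and your argument is exactly the standard one via the closed-walk expansion of spectral moments that the paper itself recalls in Section 2.
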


We give all non-isomorphic cospectral graphs of $\widetilde{D}_n$ (resp. $\widetilde{E}_6$) by the above cospectral invariant.

\begin{lem}\label{zjyl}
Graph $G$ is a non-isomorphic cospectral graphs of $\widetilde{D}_n$ (resp. $\widetilde{E}_6$) if and only if $G= C_4+P_{n}$ (resp. $C_6+K_1$).
\end{lem}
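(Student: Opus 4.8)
The plan is to prove the stronger statement that, up to isomorphism, the only graphs cospectral with $\widetilde D_n$ (resp.\ $\widetilde E_6$) are $\widetilde D_n$ and $C_4+P_n$ (resp.\ $\widetilde E_6$ and $C_6+K_1$); the non-isomorphic mates are then exactly $C_4+P_n$ (resp.\ $C_6+K_1$). First I would dispose of the easy ``if'' direction. A direct computation with the standard spectra $\sigma(C_m)$ and $\sigma(P_\ell)$ gives $\sigma(\widetilde D_n)=\sigma(C_4)\cup\sigma(P_n)=\{2,0,0,-2\}\cup\{2\cos\frac{j\pi}{n+1}:1\le j\le n\}$ and $\sigma(\widetilde E_6)=\sigma(C_6)\cup\sigma(K_1)$, so each listed pair is cospectral; the two members are non-isomorphic because one is connected and the other is not. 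In particular $\widetilde D_n$ is a tree on $n+4$ vertices and $n+3$ edges, and $\widetilde E_6$ is a tree on $7$ vertices and $6$ edges.

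For the ``only if'' direction, let $G$ be cospectral with $\widetilde D_n$. Since the spectral radius is $2$, every component of $G$ has spectral radius at most $2$ and is therefore one of Smith's graphs in Figure \ref{x1}. Cospectrality fixes $|V(G)|=n+4$ and, via $\mathrm{S}_2=2|E|$, $|E(G)|=n+3$, so the cyclomatic number of $G$ equals $c-1$, where $c$ is the number of components. As the only Smith graphs containing a cycle are the $C_m$ (each of cyclomatic number $1$) and all the rest are trees, $G$ has exactly one tree component and exactly $c-1$ cycle components. By Perron--Frobenius, $2$ is a simple eigenvalue of each connected radius-$2$ graph, so its multiplicity in $G$ equals the number of cycle components plus $1$ if the tree component is affine; since this multiplicity is $1$, I am left with two cases: (ii) $G$ is a single affine tree, or (i) $G=C_m+T_0$ with $T_0$ an ordinary Dynkin tree of radius $<2$.

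In case (ii) the only affine trees on $n+4$ vertices are $\widetilde D_n$ and, when $n\in\{3,4,5\}$, one of $\widetilde E_6,\widetilde E_7,\widetilde E_8$; a short evaluation of the $C_4$-free invariant $N_{P_3}$ separates these from $\widetilde D_n$, so Lemma \ref{yinli2} forces $G\cong\widetilde D_n$. Case (i) is the crux. Bipartiteness of $\widetilde D_n$ makes $-2$ an eigenvalue, which only $C_m$ can supply, so $m$ is even. The decisive point is that \emph{every nonzero eigenvalue of $\widetilde D_n$ is simple} (from the spectrum above, $0$ is its only repeated value); hence no nonzero eigenvalue of $G$ may have multiplicity $\ge 2$. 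But for even $m\ge 6$ the cycle $C_m$ carries the nonzero eigenvalue $2\cos\frac{2\pi}{m}$ with multiplicity $2$, a contradiction, so $m=4$. Cancelling $\sigma(C_4)$ from both spectra gives $\sigma(T_0)=\sigma(P_n)$, and since $P_n$ is determined by its spectrum, $T_0\cong P_n$ and $G=C_4+P_n$. I expect this simplicity step to be the main obstacle: the invariant of Lemma \ref{yinli2} by itself does \emph{not} exclude the larger even cycles, and establishing the simplicity of all nonzero eigenvalues uniformly in $n$ is what makes the argument go through.

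The graph $\widetilde E_6$ is a finite instance of the same analysis, with $|V(G)|=7$ and $|E(G)|=6$. The identical dichotomy applies: either $G$ is a connected affine tree on $7$ vertices --- namely $\widetilde E_6$ or the affine $\widetilde D$-type tree on $7$ vertices, of which Lemma \ref{yinli2} ($N_{P_3}=6$ versus $7$) retains only $\widetilde E_6$ --- or $G=C_m+T_0$ with $m$ even, here $m\in\{4,6\}$. The possibility $m=4$, that is $C_4+P_3$, is eliminated by Lemma \ref{yinli2} since $N_{P_3}(C_4+P_3)+2N_{C_4}(C_4+P_3)=7\neq 6$, leaving $G=C_6+K_1$. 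Combining the two directions yields the stated equivalence.
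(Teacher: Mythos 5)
Your proof is correct, and its skeleton coincides with the paper's: reduce the structure of a cospectral mate $G$ to ``one cycle plus one tree of spectral radius $<2$'' by combining the Smith classification of radius-$\le 2$ components, the count $|E(G)|=|V(G)|-1$, and the simplicity of the eigenvalue $2$. Where you genuinely diverge is the crux of identifying the cycle length for $\widetilde D_n$. The paper stays entirely inside Lemma \ref{yinli2}: writing $2N_{C_4}(C_s)=n+4-s-N_{P_3}(T_{n+4-s})$ and running a case analysis over $T_{n+4-s}\in\{P,D,E_6,E_7,E_8\}$, it forces $N_{C_4}(C_s)=1$ and hence $s=4$ together with $T=P_n$ in one stroke. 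You instead exploit the multiset structure of $\sigma(\widetilde D_n)=\sigma(C_4)\cup\sigma(P_n)$: every nonzero eigenvalue is simple, while an even cycle $C_m$ with $m\ge 6$ contributes the nonzero eigenvalue $2\cos\frac{2\pi}{m}$ with multiplicity two; this kills $m\ge6$, and cancelling $\sigma(C_4)$ plus the fact that $P_n$ is DS finishes the case. Both arguments are sound; yours buys a cleaner treatment of the cycle length and avoids the paper's somewhat tangled subcases on $N_{P_3}(T_{n+4-s})$, at the price of invoking the (standard but external) fact that paths are DS, and it is the right instinct to fall back on Lemma \ref{yinli2} for $\widetilde E_6$, whose eigenvalues $\pm1$ are not simple. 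Two small remarks: you also handle the connected case ($G$ a single affine tree) explicitly via $N_{P_3}$, which the paper dispatches rather tersely by appealing to which Smith graphs are DS; and your closing claim that Lemma \ref{yinli2} ``by itself does not exclude the larger even cycles'' is actually not true --- the paper's computation shows the invariant does exclude them, since $N_{C_4}(C_s)=0$ for $s\ne 4$ --- but this misjudgment has no bearing on the validity of your argument.
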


\begin{proof}
It is well know that $C_4+P_{n}$ is a non-isomorphic cospectral graphs of $\widetilde{D}_n$ (  Page 77 in \cite{cvetkovic1980spectra}).
Let $G$ be a non-isomorphic cospectral graph of $\widetilde{D}_n$.
We will show that $G=C_4+P_{n}$.


Since $G$ is cospectral with $\widetilde{D}_n$ and $G$ is not a Smith's graph, we know that the spectral radius of $G$ is equal to $2$ and  $G$ is not connected.
Then the spectral radii of every connected components of $G$ are at most $2$.
Since connected graphs with spectral radius at most $2$ are cycles or trees, every connected components of $G$ are cycles or trees.
From $|V(G)|=|V(\widetilde{D}_n)|=n+4$ and $|E(G)|=|E(\widetilde{D}_n)|=n+3$, we get $|E(G)|=|V(G)|-1$.
Then only one connected component of $G$ is a tree and the other connected components are cycles.
The algebraic multiplicity of the eigenvalue $\lambda=2$ of $G$ (or $\widetilde{D}_n$) is 1 ( Page 77 in \cite{cvetkovic1980spectra}),
it yields that only one connected component of $G$ is cycle and the spectral radii of the other connected components are less than $2$.
Then the graph $G$ contains exactly two connected components, one of them is a cycle and the other is a tree with spectral radius less than $2$.

We know that  $G=C_s+T_{n+4-s}$, where $C_s$ is a cycle with $s$ vertices and $T_{n+4-s}$ is a tree whose spectral radius is less than 2.
Since $G$ is cospectral with $\widetilde{D}_n$, we have $N_{P_3}(G)+2N_{C_4}(G)=N_{P_3}(\widetilde{D}_n)+2N_{C_4}(\widetilde{D}_n)$ by Lemma \ref{yinli2}.
We know that $N_{P_3}(\widetilde{D}_n)+2N_{C_4}(\widetilde{D}_n)=n+4$ from Figure \ref{x1}.
Then $N_{P_3}(C_s)+N_{P_3}(T_{n+4-s})+2N_{C_4}(C_s)+2N_{C_4}(T_{n+4-s})=n+4$.
We have $2N_{C_4}(C_s)=n+4-N_{P_3}(C_s)-N_{P_3}(T_{n+4-s})=n+4-s-N_{P_3}(T_{n+4-s})$.
Since the spectral radius of $T_{n+4-s}$ is less than 2, we know that  $T_{n+4-s}$ is isomorphic to one of  $P_{n+4-s}$, $D_{n+2-n}$, $E_6$, $E_7$ and $E_8$.
If $T_{n+4-s} =P_{n+4-s}$, we have $N_{P_3}(T_{n+4-s})=n+2-s$ when $n-s \geq -1$ and $N_{P_3}(T_{n+4-s})=0$ when $-3\leq n-s<-1$.
When $n-s \geq -1$,  we have $N_{C_4}(C_s)=1$, i.e., $s=4$.
When $-3\leq n-s<-1$, we have $1\leq2N_{C_4}(C_s)<3$.
Then $N_{C_4}(C_s)=1$, i.e., $s=4$.
In this case, $N_{C_4}(G)=1$. 
If  $T_{n+4-s}$ is isomorphic to one of  $D_{n+2-n}$, $E_6$, $E_7$ and $E_8$, it yields that $N_{P_3}(T_{n+4-s})=n+3-s$  from Figure \ref{x1}.
In this case, we get $2N_{C_4}(C_s)=1$.
It contradicts the fact that $2N_{C_4}(C_s)$ is even.
So we have $C_s=C_4$ and $T_{n+4-s}=P_n$.

We derive that $G=C_4+P_{n}$ from the above discussion, i.e., $G$ is a non-isomorphic cospectral graphs of $\widetilde{D}_n$ if and only if $G= C_4+P_{n}$.
Similarly, we obtain that $G$ is a non-isomorphic cospectral graphs of $\widetilde{E}_6$ if and only if $G= C_6+K_1$.
\end{proof}

Next, we prove that there is an eigenvalue $\lambda$ of the power hypergraph $\widetilde{D}^{(k)}_n$ (resp. $\widetilde{E}^{(k)}_6$) but  $\lambda$ is not an eigenvalue of $(C_4+P_{n})^{(k)}$ (resp. $(C_6+K_1)^{(k)}$).
Therefore, we obtain the main result in this section.

\begin{thm}\label{pbjgjp}
All Smith's graphs are determined by the high-ordered spectrum.
\end{thm}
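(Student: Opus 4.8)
The plan is to reduce the statement to two critical pairs and then to separate each pair by a single, carefully chosen eigenvalue of a power hypergraph. By the reduction recorded just before the statement (Van Dam and Haemers showed that every Smith graph other than $\widetilde{D}_n$ and $\widetilde{E}_6$ is DS, and a DS graph is automatically DHS because the $2$-ordered spectrum is the ordinary spectrum), it suffices to prove that $\widetilde{D}_n$ and $\widetilde{E}_6$ are DHS. By Lemma \ref{zjyl} the only non-isomorphic cospectral mate of $\widetilde{D}_n$ is $C_4+P_n$ and that of $\widetilde{E}_6$ is $C_6+K_1$, so I must show that $\widetilde{D}_n$ is not high-ordered cospectral with $C_4+P_n$, and $\widetilde{E}_6$ is not high-ordered cospectral with $C_6+K_1$. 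For this it is enough to produce, for a single $k>3$, an eigenvalue of one power hypergraph that is not an eigenvalue of the other.

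The mechanism is Lemma \ref{yinli1}(2): for $k>3$, $\lambda\in\sigma(G^{(k)})$ if and only if $\lambda^k=\beta^2$ for some eigenvalue $\beta$ of a subgraph of $G$. In particular the nonnegative real eigenvalues of $G^{(k)}$ are exactly the numbers $|\beta|^{2/k}$, where $\beta$ runs over all eigenvalues of all connected subgraphs of $G$. Hence, writing $M(G)=\{\,|\beta|:\beta\in\sigma(H),\ H\text{ a connected subgraph of }G\,\}$, it suffices to exhibit a real $v\ge 0$ lying in $M(\widetilde{D}_n)\setminus M(C_4+P_n)$ (resp. $M(\widetilde{E}_6)\setminus M(C_6+K_1)$): then $v^{2/k}$ is an eigenvalue of one power hypergraph but not of the other, so their $k$-ordered spectra already differ.

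The two choices exploit a spectral gap below $2$. Both mates $C_4+P_n$ and $C_6+K_1$ have maximum degree $2$, so each connected subgraph is a path $P_j$ or the single cycle; every subgraph eigenvalue therefore has the form $2\cos(\pi q)$ with $q$ rational, and among the subgraphs of spectral radius $<2$ (the paths) the largest attainable magnitude is that of the \emph{longest available} path, namely $2\cos\frac{\pi}{n+1}$ for $C_4+P_n$ (for $n\ge 4$) and $2\cos\frac{\pi}{7}$ for $C_6+K_1$; the only larger magnitudes present are the cycle values ($2$, together with $1$ for $C_6$). On the tree side, $\widetilde{D}_n$ contains the path $P_{n+2}$ running from an end leaf through the central path to an end leaf on the other side, of spectral radius $v_1=2\cos\frac{\pi}{n+3}$; and $\widetilde{E}_6$ contains the Dynkin subtree $E_6$ (delete one leaf tip of the spider $S(2,2,2)$), of spectral radius $v_2=2\cos\frac{\pi}{12}$ (one may use $D_5$ and $2\cos\frac{\pi}{8}$ instead). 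Since $2\cos\frac{\pi}{n+1}<v_1<2$ and $2\cos\frac{\pi}{7}<v_2<2$, each $v_i$ falls in the interval just below $2$ that the corresponding mate cannot reach, whence $v_1\notin M(C_4+P_n)$ and $v_2\notin M(C_6+K_1)$. By the previous paragraph $v_1^{2/k}$ and $v_2^{2/k}$ separate the $k$-ordered spectra, and the theorem follows (the finitely many small values of $n$ are handled directly, e.g. by the claw eigenvalue $\sqrt{3}=2\cos\frac{\pi}{6}$).

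The main work, and the only genuinely delicate point, is verifying $v_i\notin M(\cdot)$, i.e. that $v_i$ does not coincide with any $2\lvert\cos\frac{\pi t}{j+1}\rvert$ for an admissible path $P_j$ in the mate. This comes down to the injectivity of $\cos$ on $(0,\pi)$ together with the bound on available denominators: the paths contribute at most $n+1$ (resp. $7$) as the value $j+1$, which forces every path magnitude strictly below $v_i$, while the cycle supplies only the boundary value $2$ (and $1$). Conceptually this is precisely why the cycle-containing mates fail: a cycle jumps directly to spectral radius $2$, leaving an empty band just beneath $2$, whereas the affine trees $\widetilde{D}_n$ and $\widetilde{E}_6$ approach $2$ through elongated or branching subtrees whose spectral radii land inside that forbidden band. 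This is the step I would write out with the cosine comparison in full detail.
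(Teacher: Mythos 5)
Your proposal is correct and follows essentially the same route as the paper: reduce via the known DS classification and Lemma \ref{zjyl} to the two pairs $(\widetilde{D}_n,\,C_4+P_n)$ and $(\widetilde{E}_6,\,C_6+K_1)$, then use Lemma \ref{yinli1} to exhibit a subgraph eigenvalue lying in the band just below $2$ that the cycle-plus-path mate cannot attain. The only difference is cosmetic: the paper's witnesses are the spectral radii of $D_n$ and of the $D_5$-type subtree (and it delegates the non-membership check to Lemma \ref{dingliben}), whereas you use $P_{n+2}$ and $E_6$ and carry out the cosine comparison directly.
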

\begin{proof}
We will prove that $\widetilde{D}_n$ and $\widetilde{E}_6$ are DHS.
By Lemma \ref{zjyl}, we know that graph $G$ is a non-isomorphic cospectral graphs of $\widetilde{D}_n$ (resp. $\widetilde{E}_6$) if and only if $G= C_4+P_{n}$ (resp. $C_6+K_1$).
Then we only need to prove that $\widetilde{D}_n$ (resp. $\widetilde{E}_6$ ) is not high-ordered cospectral with $(C_4+P_{n})$ (resp. $C_6+K_1$ ).

Next, we prove that there is an eigenvalue $\lambda$ of the hypergraph $\widetilde{D}^{(k)}_n$ such that $\lambda$ is not an eigenvalue  of the hypergraph $(C_4+P_{n})^{(k)}$.
Since $D_n$ is an induced subgraph of $\widetilde{D}_n$ and $2\cos {\frac{1}{2n+2}}\pi$ is an eigenvalue of $D_n$ (Page 77 in \cite{cvetkovic1980spectra}), we know that $(2\cos {\frac{1}{2n+2}}\pi)^{\frac{2}{k}}$ is an eigenvalue of $\widetilde{D}^{(k)}_n$ by Lemma \ref{yinli1}.
However, we know that $(2\cos {\frac{1}{2n+2}}\pi)^{\frac{2}{k}}$ is not an eigenvalue of $(C_4+P_{n-3})^{(k)}$ by Lemma \ref{dingliben}.
Then  $\widetilde{D}_n$ is DHS.

Since $D_3$ is an induced subgraph of $\widetilde{E}_6$ and  $2\cos{\frac{\pi }{8}}$ is an eigenvalue of $D_3$, it yields that $(2\cos{\frac{\pi }{8}})^{\frac{2}{k}}$ is an eigenvalue of $\widetilde{E}^{(k)}_6$ by Lemma \ref{yinli1}.
However, we know that $(2\cos{\frac{\pi }{8}})^{\frac{2}{k}}$ is not an eigenvalue of $(C_6+K_1)^{(k)}$ by  Lemma \ref{dingliben}.
Then  $\widetilde{E}_6$ is DHS.
\end{proof}


\section{High-ordered cospectral invariants of trees}

Studying the high-ordered cospectral invariants of graph $G$ is helpful to study the high-ordered spectral characterizations of $G$.
In this section, we give some high-ordered cospectral invariants of trees by using the spectral moments of power hypertrees.
As their applications, we give infinitely many pairs of trees with same spectrum but different high-ordered spectrum.

Let $T$ and $T^*$ be two high-ordered cospectral trees.
Then $\mathrm{S}_d(T^{(k)})=\mathrm{S}_d({T^*}^{(k)})$ for all positive integer $k$ and $d$.
Let $\mathfrak{T}({m})$ denote the set of trees with $m$ edges.
From the spectral moment formula shown in Lemma \ref{yinlimcs}, we obtain
\begin{align}\label{fangcheng}
\sum\limits_{m=1}^{\frac{d}{k}}\frac{1}{2}{{(k - 1)}^{\left( {|E(T)| - m} \right)(k - 1)}}{k^{m(k - 2) + 1}} {\sum\limits_{\widehat{T} \in {\mathfrak{{T}}({m})}}{c_{\frac{2d}{k}}(\widehat{T})} } \left(N_{T}(\widehat{T})-N_{T^*}(\widehat{T})\right)=0
\end{align}for all positive integer $k\geq2$.
From Equation (\ref{fangcheng}), we obtain some high-ordered cospectral invariants of trees.

\begin{thm}\label{bbldl1}
If a tree $T$ is high-ordered cospectral with tree $T^*$, then
$${\sum\limits_{\widehat{T} \in {\mathfrak{T}(m)}}{c_{d}(\widehat{T})} } N_{T}(\widehat{T})={\sum\limits_{\widehat{T} \in {\mathfrak{T}({m})}}{c_{d}(\widehat{T})} } N_{T^*}(\widehat{T})$$
for all positive even integer $d$ and all $m \in [\frac{d}{2}] $.
\end{thm}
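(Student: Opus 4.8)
The plan is to recover the $\frac{d}{2}$ separate identities (one for each $m$) from the single summed relation in Equation (\ref{fangcheng}) by letting the hypergraph order $k$ range over all integers $k \geq 2$. Fix an even integer $d$ and set, for $m \in [\frac{d}{2}]$,
\[
b_m := \sum_{\widehat{T} \in \mathfrak{T}(m)} c_d(\widehat{T})\bigl(N_T(\widehat{T}) - N_{T^*}(\widehat{T})\bigr);
\]
the goal is to prove $b_m = 0$ for every $m$. The first step is a substitution in (\ref{fangcheng}): the moment index appearing there (which I rename $D$ to avoid a clash with the $d$ of the statement) is taken to be $D = \frac{kd}{2}$. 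Because $d$ is even this is a positive integer divisible by $k$, with $\frac{2D}{k} = d$ and $\frac{D}{k} = \frac{d}{2}$, so the spectral-moment coefficient $c_{2D/k}$ collapses to the fixed $c_d$ and the relation becomes
\[
\sum_{m=1}^{d/2} \tfrac{1}{2}(k-1)^{(E-m)(k-1)} k^{m(k-2)+1}\, b_m = 0, \qquad E := |E(T)|,
\]
for every integer $k \geq 2$.

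The decisive observation is that the coefficient $\alpha_{k,m} := \tfrac12 (k-1)^{(E-m)(k-1)} k^{m(k-2)+1}$ of $b_m$ has a consecutive ratio
\[
\frac{\alpha_{k,m+1}}{\alpha_{k,m}} = \frac{k^{k-2}}{(k-1)^{k-1}} =: \rho_k
\]
that is independent of $m$, since raising $m$ by one lowers the $(k-1)$-exponent by $k-1$ and raises the $k$-exponent by $k-2$. Hence $\alpha_{k,m} = \alpha_{k,1}\,\rho_k^{\,m-1}$, and because $\alpha_{k,1} > 0$ I may divide it out to obtain
\[
\sum_{m=1}^{d/2} b_m\, \rho_k^{\,m-1} = 0 \qquad \text{for all } k \geq 2.
\]
Reading the left-hand side as the one polynomial $P(x) = \sum_{m=1}^{d/2} b_m x^{m-1}$ of degree at most $\frac{d}{2}-1$, this asserts that $P$ vanishes at every point $\rho_k$, $k \geq 2$.

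To finish, I would show that the numbers $\rho_k$ are pairwise distinct, so that $P$ has infinitely many roots and must be the zero polynomial, forcing $b_m = 0$ for all $m \in [\frac{d}{2}]$ — exactly the claim. Distinctness follows from strict monotonicity: a short computation gives $\rho_{k+1}/\rho_k = (1 - k^{-2})^{k-1} < 1$, so $(\rho_k)_{k\geq 2}$ is strictly decreasing (and $\rho_k \to 0$). The only genuine work, and the step I expect to be the main obstacle, is the exponent bookkeeping that yields the clean ratio $\rho_k$: one must check that the $m$-dependence in $(E-m)(k-1)$ and $m(k-2)+1$ cancels in the quotient $\alpha_{k,m+1}/\alpha_{k,m}$, which is what reduces the infinite family of linear relations to a single polynomial identity. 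Everything downstream — dividing by $\alpha_{k,1}$ and invoking that a nonzero polynomial of bounded degree has finitely many roots — is then routine.
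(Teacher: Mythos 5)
Your proof is correct and follows essentially the same route as the paper's: both reduce Equation (\ref{fangcheng}) to the observation that the coefficient of the $m$-th unknown is $f_1(k)\,\rho_k^{\,m-1}$ with $\rho_k = k^{k-2}/(k-1)^{k-1}$, and then force all the unknowns to vanish by varying $k$ --- the paper via a Vandermonde system at $\frac{d}{2}$ distinct values of $k$, you via a polynomial of degree at most $\frac{d}{2}-1$ vanishing at infinitely many points, which is the same fact in different clothing. Your explicit verification that the $\rho_k$ are pairwise distinct (from $\rho_{k+1}/\rho_k = (1-k^{-2})^{k-1} < 1$) is a detail the paper's Vandermonde step tacitly requires (distinctness of the $k_i$ alone is not literally enough; one needs distinctness of the $\rho_{k_i}$) but does not check, so your write-up is if anything slightly more complete.
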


\begin{proof}
Since $T$ and $T^*$ are high-ordered cospectral, we have $\mathrm{S}_{d}(T^{(k)})=\mathrm{S}_{d}({T^*}^{(k)})$ for all
positive integer $k \geq 2$.
Since $\mathrm{S}_{d}(T^{(k)})=0$ if $k \nmid  d$, we assume $k \mid  d$ in the following proof.
Let $d=kz$.
From Equation (\ref{fangcheng}), we have
\begin{align}\label{shizixin}
\sum\limits_{m=1}^{z}\frac{1}{2}{{(k - 1)}^{\left( {|E(T)| - m} \right)(k - 1)}}{k^{m(k - 2) + 1}} {\sum\limits_{\widehat{T} \in {\mathfrak{T}({m})}}{c_{2z}(\widehat{T})} } \left(N_{T}(\widehat{T})-N_{T^*}(\widehat{T})\right)=0
\end{align}for all positive integer $k$.
Let $f_m(k)=\frac{1}{2}{{(k - 1)}^{\left( {|E(T)| - m} \right)(k - 1)}}{k^{m(k - 2) + 1}}$.
Let $y_m={\sum\nolimits_{\widehat{T} \in {\mathfrak{T}({m})}}{c_{2z}(\widehat{T})} } \left(N_{T}(\widehat{T})-N_{T^*}(\widehat{T})\right)$ for all $m \in [z]$.
From Equation (\ref{shizixin}), we have $\sum\nolimits_{m=1}^{z}{f_m(k)y_m}=0$.
Since $\frac{{{f_i}(k)}}{{{f_{i - 1}}(k)}} = \frac{{{k^{k - 2}}}}{{{{(k - 1)}^{k - 1}}}}$ for $i =2,3,\cdots,z $,
we get ${{f_m}(k)}={{f_1}(k)}(\frac{{{k^{k - 2}}}}{{{{(k - 1)}^{k - 1}}}})^{m-1}$.
Then $\sum\nolimits_{m=1}^{z}{{{f_1}(k)}(\frac{{{k^{k - 2}}}}{{{{(k - 1)}^{k - 1}}}})^{m-1}y_m}=0$.
For $z \geq 1$, take $k$ as $k_1,k_2,\ldots,k_z$, which are $z$ different positive integers.
It is following that
\begin{align}\label{fangchengzu}
\left[ {\begin{array}{*{20}{c}}
   {{f_1}({k_1})} & {{f_1}(k_1)}\frac{{{k_1^{k_1 - 2}}}}{{{{(k_1 - 1)}^{k_1 - 1}}}} &  \cdots  & {{f_1}(k_1)}(\frac{{{k_1^{k_1 - 2}}}}{{{{(k_1 - 1)}^{k_1 - 1}}}})^{z-1}  \\
   {{f_1}({k_2})} & {{f_1}(k_2)}\frac{{{k_2^{k_2 - 2}}}}{{{{(k_2 - 1)}^{k_2 - 1}}}} &  \cdots  & {{f_1}(k_2)}(\frac{{{k_2^{k_2 - 2}}}}{{{{(k_2 - 1)}^{k_2 - 1}}}})^{z-1}   \\
    \vdots  &  \vdots  &  \vdots  &  \vdots   \\
   {{f_1}({k_z})}  & {{f_1}(k_z)}\frac{{{k_z^{k_z - 2}}}}{{{{(k_z - 1)}^{k_z - 1}}}} &  \cdots  & {{f_1}(k_z)}(\frac{{{k_z^{k_z - 2}}}}{{{{(k_z - 1)}^{k_z - 1}}}})^{z-1}   \\
\end{array}} \right]\left[ {\begin{array}{*{20}{c}}
   {{y_1}}  \\
   {{y_2}}  \\
    \vdots   \\
   {{y_z}}  \\
\end{array}} \right] = 0.
\end{align}
Then the coefficient matrix of the Equation (\ref{fangchengzu}) is a Vandermonde matrix.
And since $k_1,k_2,\ldots,k_z$ are distinct,  the determinant of the  coefficient matrix are not equal to zero.
Then $y_m=0$ for all $m \in [z]$.
\end{proof}

Let $m=|E(T)|=|E(T^*)|$ in Theorem \ref{bbldl1}, we directly get the following high-ordered cospectral invariants.
\begin{thm}\label{pjxsbbl}
Let $T$ and $T^*$ be two high-ordered cospectral trees. Then $c_d(T)=c_d(T^*)$ for any positive even number $d \geq {2|E(T)|}$.
\end{thm}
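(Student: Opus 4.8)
The plan is to read off the claim as the $m=|E(T)|$ specialization of Theorem~\ref{bbldl1}. First I would record that high-ordered cospectral trees are in particular cospectral in the ordinary sense (this is the $k=2$ instance of the definition), so they share the classical moments $\mathrm{S}_1=0$ and $\mathrm{S}_2=2|E|$; consequently $|E(T)|=|E(T^*)|=:m_0$. This equality is exactly what makes the $m=m_0$ instance of Theorem~\ref{bbldl1} usable symmetrically in $T$ and $T^*$, and it is the one preliminary fact worth stating explicitly before specializing.

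The key structural observation is that the two sums appearing in Theorem~\ref{bbldl1} each collapse to a single term when $m=m_0$. Indeed, any subtree of $T$ with exactly $m_0=|E(T)|$ edges must be connected and use every edge of $T$, hence it can only be $T$ itself; therefore $N_{T}(\widehat{T})=1$ when $\widehat{T}\cong T$ and $N_{T}(\widehat{T})=0$ for every other $\widehat{T}\in\mathfrak{T}(m_0)$. The same reasoning, together with $|E(T^*)|=m_0$, gives $N_{T^*}(\widehat{T})=1$ exactly when $\widehat{T}\cong T^*$ and $0$ otherwise. It is here that the equal-edge count is genuinely needed: if the two trees had different numbers of edges the right-hand sum would pick up proper subtrees and would not reduce to a single coefficient.

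Finally I would invoke Theorem~\ref{bbldl1} with $m=m_0$. This choice is admissible precisely when $m_0\in[\tfrac{d}{2}]$, that is, when $d$ is a positive even integer with $d\ge 2m_0=2|E(T)|$, which is exactly the range in the statement. Under these hypotheses the left-hand sum of Theorem~\ref{bbldl1} reduces to $c_{d}(T)$ and the right-hand sum reduces to $c_{d}(T^*)$, so the identity becomes $c_{d}(T)=c_{d}(T^*)$, where $c_d$ is the $d$-th order spectral moment coefficient of the whole tree as defined in Lemma~\ref{yinlixs}.

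I do not expect a real obstacle: the result is essentially a corollary of Theorem~\ref{bbldl1}, and the only points requiring a line of justification are the equality $|E(T)|=|E(T^*)|$ and the collapse of the subtree sums, both of which follow from elementary facts (cospectrality fixes the edge count, and the unique maximal subtree of a tree is the tree itself). The mild care needed is simply to verify that the admissibility condition $m_0\le\tfrac{d}{2}$ in Theorem~\ref{bbldl1} translates to the stated bound $d\ge 2|E(T)|$.
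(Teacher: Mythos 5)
Your proposal is correct and follows exactly the paper's route: the paper derives this result by setting $m=|E(T)|=|E(T^*)|$ in Theorem~\ref{bbldl1}, which is precisely your specialization. You simply spell out the two details the paper leaves implicit (that cospectrality forces equal edge counts, and that the sums over $\mathfrak{T}(m_0)$ collapse to the single terms $c_d(T)$ and $c_d(T^*)$), and your translation of the admissibility condition into $d\ge 2|E(T)|$ matches the stated range.
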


Let the set of trees with $m$ edges ${{\mathfrak{T}}({m})}=\{{\widehat{T}}_1,{\widehat{T}}_2,\ldots,{\widehat{T}}_{|{{\mathfrak{T}}({m})}|}\}$. 
From Theorem \ref{bbldl1}, we get the following equation after taking $d$ as $d_1,d_2,\ldots,d_{|{\mathfrak{T}(m)}|}$.
\begin{align}\label{fangchengzu2}
\left[ {\begin{array}{*{20}{c}}
   {{c_{{d_1}}}({\widehat{T}_1})} & {{c_{{d_1}}}({\widehat{T}_2})} &  \cdots  & {{c_{{d_1}}}({\widehat{T}_{|{\mathfrak{T}(m)}|}})}  \\
   {{c_{{d_2}}}({\widehat{T}_1})} & {{c_{{d_2}}}({\widehat{T}_2})} &  \cdots  & {{c_{{d_2}}}({\widehat{T}_{|{\mathfrak{T}(m)}|}})}  \\
    \vdots  &  \vdots  &  \vdots  &  \vdots   \\
   {{c_{{d_{|{\mathfrak{T}(m)}|}}}}({\widehat{T}_1})} & {{c_{{d_{|{\mathfrak{T}(m)}|}}}}({\widehat{T}_2})} &  \cdots  & {{c_{{d_{|{\mathfrak{T}(m)}|}}}}({\widehat{T}_{|{\mathfrak{T}(m)}|}})}  \\
\end{array}} \right]\left[ {\begin{array}{*{20}{c}}
   {{h_1}}  \\
   {{h_2}}  \\
    \vdots   \\
   {{h_{|{\mathfrak{T}(m)}|}}}  \\
\end{array}} \right] = 0,
\end{align}
where $h_i=N_{T}(\widehat{T}_i)-N_{T^*}(\widehat{T}_i)$ for all $i \in [|{\mathfrak{T}(m)}|]$.
If there exist $d_1,d_2,\ldots,d_{|{\mathfrak{T}(m)}|}$ such that the coefficient matrix of  Equation (\ref{fangchengzu2}) is nonsingular,
we get $N_{T}(\widehat{T})=N_{T^*}(\widehat{T})$ for all $\widehat{T} \in \mathfrak{T}(m)$.
By the formula for the spectral moment coefficients for trees, i.e. Equation (\ref{pjxsgs}),
we can calculate the spectral moment coefficients $c_{d}({\widehat{T}})$.
We calculate the $d$-th order spectral moment coefficients of trees with 3 edges for $d=6,8$, the $t$-th order spectral moment coefficients of trees with 4 edges for $t=8,10,12$ and the $l$-th order spectral moment coefficients of trees with 5 edges for $l=10,12,14,16,18,20$ (see Table \ref{4xishu} and Table \ref{5xishu}).

\begin{figure}[H]
  \centering
  \subfloat[$P_2$]{
\begin{minipage}{0.10\textwidth}
\centering
\includegraphics[scale=0.2]{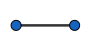}
\end{minipage}
}
 \subfloat[$P_3$]{
\begin{minipage}[t]{0.10\textwidth}
\centering
\includegraphics[scale=0.2]{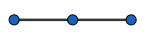}
\end{minipage}
}
 \subfloat[$P_4$]{
\begin{minipage}[t]{0.10\textwidth}
\centering
\includegraphics[scale=0.2]{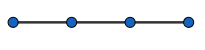}
\end{minipage}
}
  \subfloat[$S_4$]{
\begin{minipage}[t]{0.1\textwidth}
\centering
\includegraphics[scale=0.2]{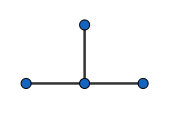}
\end{minipage}
}
   \subfloat[$P_5$]{
\begin{minipage}[t]{0.11\textwidth}
\centering
\includegraphics[scale=0.2]{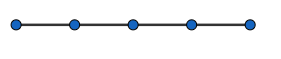}
\end{minipage}
}
   \subfloat[$Q_5$]{
\begin{minipage}[t]{0.10\textwidth}
\centering
\includegraphics[scale=0.2]{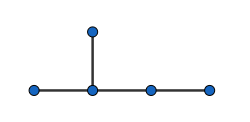}
\end{minipage}
}
   \subfloat[$S_5$]{
\begin{minipage}[t]{0.10\textwidth}
\centering
\includegraphics[scale=0.2]{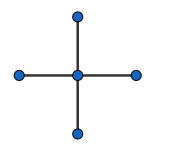}
\end{minipage}
} \\
   \subfloat[$P_6$]{
\begin{minipage}[t]{0.14\textwidth}
\centering
\includegraphics[scale=0.2]{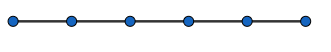}
\end{minipage}
}
   \subfloat[$Q_6$]{
\begin{minipage}[t]{0.12\textwidth}
\centering
\includegraphics[scale=0.2]{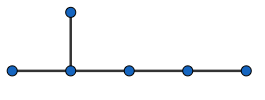}
\end{minipage}
}
   \subfloat[$R_6$]{
\begin{minipage}[t]{0.15\textwidth}
\centering
\includegraphics[scale=0.2]{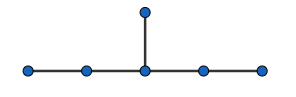}
\end{minipage}
}
   \subfloat[$H_6$]{
\begin{minipage}[t]{0.10\textwidth}
\centering
\includegraphics[scale=0.2]{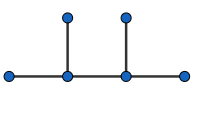}
\end{minipage}
}
   \subfloat[$J_6$]{
\begin{minipage}[t]{0.10\textwidth}
\centering
\includegraphics[scale=0.2]{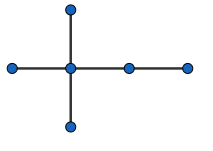}
\end{minipage}
}
   \subfloat[$S_6$]{
\begin{minipage}[t]{0.10\textwidth}
\centering
\includegraphics[scale=0.2]{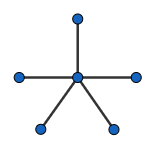}
\end{minipage}
} \\
 \caption{Trees with edges at most $5$}
 \end{figure}

\begin{table}[H]
  \centering
   \subfloat[The spectral moment coefficients of trees with 3 edges]{
  \begin{tabular}{|c|c|c|} \hline
 & $c_d(P_4)$&$c_d(S_4)$\\ \hline
$d=6$&6&12 \\ \hline
$d=8$&32&72 \\ \hline
\end{tabular} }~~~
\subfloat[The spectral moment coefficients of trees with 4 edges]{
\begin{tabular}{|c|c|c|c|} \hline
 & $c_d(P_5)$& $c_d(Q_5)$&$c_d(S_5)$\\ \hline
$d=8$&8&16&48 \\ \hline
$d=10$&60&140&480 \\ \hline
$d=12$&300&804&3120 \\ \hline
\end{tabular}}
  \caption{The spectral moment coefficients of trees with 3 or 4 edges}\label{4xishu}
\end{table}

\begin{table}[H]
  \centering
 \begin{tabular}{c|c|c|c|c|c|c} \hline
 & $c_d(P_6)$& $c_d(Q_6)$&$c_d(R_6)$&$c_d(H_6)$&$c_d(J_6)$&$c_d(S_6)$\\ \hline
$d=10$& 10 & 20 & 20 & 40 & 60 & 240\\ \hline
$d=12$& 96 & 216 & 228 & 504 & 792 & 3600\\ \hline
$d=14$& 588 & 1484 & 1652 & 3976 & 6552 & 33600\\ \hline
$d=16$& 2944 & 8304 & 9728 & 25216 & 43680 & 252000\\ \hline
$d=18$& 13158 & 41328 & 50832 & 140832 & 257184 & 1668240\\ \hline
$d=20$& 54730 & 190800 & 245880 & 724320 & 1398600 & 10206000\\ \hline
\end{tabular}
  \caption{The spectral moment coefficients of trees with 5 edges}\label{5xishu}
\end{table}

We obtain the following higher-ordered cospectral invariants about the number of subtrees.

\begin{thm}\label{zhuyao}
If a tree $T$ is high-ordered cospectral with a tree $T^*$,  then $N_{T}(\widehat{T}) =N_{T^*}(\widehat{T})$ for any tree $\widehat{T}$ within 5 edges.
\end{thm}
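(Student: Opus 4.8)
The plan is to apply Theorem~\ref{bbldl1} separately for each number of edges $m\in\{1,2,3,4,5\}$, which converts the assertion into a sequence of homogeneous linear systems in the unknown differences $h_{\widehat{T}}=N_{T}(\widehat{T})-N_{T^*}(\widehat{T})$, and then to show that each such system admits only the trivial solution. First I would enumerate all trees with exactly $m$ edges. For $m=1$ there is only $P_2$, and for $m=2$ only $P_3$; in these two cases Theorem~\ref{bbldl1} yields a single equation $c_d(\widehat{T})\,h_{\widehat{T}}=0$ whose coefficient is nonzero, so $h_{\widehat{T}}=0$ at once. For $m=3$ the trees are $P_4$ and $S_4$, for $m=4$ they are $P_5,Q_5,S_5$, and for $m=5$ they are $P_6,Q_6,R_6,H_6,J_6,S_6$, so that $|\mathfrak{T}(3)|=2$, $|\mathfrak{T}(4)|=3$ and $|\mathfrak{T}(5)|=6$.

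Next, for each of $m=3,4,5$ I would invoke Theorem~\ref{bbldl1} at several distinct even values of $d$ (all with $d\ge 2m$) to assemble the square system~(\ref{fangchengzu2}), whose coefficient matrix is $\bigl(c_{d_i}(\widehat{T}_j)\bigr)_{i,j}$. The entries are obtained directly from the spectral moment coefficient formula~(\ref{pjxsgs}) of Lemma~\ref{yinlixs}; the required values are exactly those tabulated in Table~\ref{4xishu} and Table~\ref{5xishu}. Concretely I would take $d\in\{6,8\}$ for $m=3$, $d\in\{8,10,12\}$ for $m=4$, and $d\in\{10,12,14,16,18,20\}$ for $m=5$.

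The heart of the argument is then to verify that each of these coefficient matrices is nonsingular, since this forces $h_{\widehat{T}}=0$ for every $\widehat{T}\in\mathfrak{T}(m)$. For $m=3$ the $2\times2$ matrix has determinant $6\cdot72-12\cdot32=48\neq0$. For $m=4$ and $m=5$ this reduces to computing the determinants of the $3\times3$ and $6\times6$ integer matrices read off from the two tables and checking that they are nonzero. I expect the nonsingularity check for the $6\times6$ matrix of five-edge trees, together with the accurate evaluation of all its spectral moment coefficients through the weighted sum in~(\ref{pjxsgs}), to be the main obstacle, since each such coefficient requires summing over all integer edge-weightings of total weight $\tfrac{d}{2}$. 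Once nonsingularity has been established for every $m\le5$, combining the five cases gives $N_{T}(\widehat{T})=N_{T^*}(\widehat{T})$ for all trees $\widehat{T}$ with at most five edges, which is the claim.
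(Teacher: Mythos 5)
Your proposal is correct and follows essentially the same route as the paper: reduce to the linear systems~(\ref{fangchengzu2}) via Theorem~\ref{bbldl1} and verify nonsingularity of the coefficient matrices from Table~\ref{4xishu} and Table~\ref{5xishu} (your $2\times2$ determinant $6\cdot72-12\cdot32=48\neq0$ checks out). The only cosmetic difference is that the paper disposes of the cases $m=1,2$ by citing the classical cospectral invariants $N_T(P_2)$ and $N_T(P_3)$ of ordinary cospectral graphs, whereas you obtain them as $1\times1$ instances of the same linear-system argument; both are valid.
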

\begin{proof}
Since $T$ and $T^*$ are high-ordered cospectral, we know that $T$ and $T^*$ are cospectral.
Then  $N_{T}(P_2) =N_{T^*}(P_2)$ and $N_{T}(P_3) =N_{T^*}(P_3)$.
From Table \ref{4xishu} and Table \ref{5xishu}, we get the coefficient matrix of Equation (\ref{fangchengzu2}) when $m=3,4,5$.
It is easy to check that these coefficient matrices are nonsingular.
Then $N_{T}(\widehat{T})=N_{T^*}(\widehat{T})$ for all $\widehat{T} \in \mathfrak{T}(m)$, $m=1,2,3,4,5$.
\end{proof}

By the above high-ordered cospectral invariants, we get infinitely many pairs of cospectral trees with different high-ordered spectrum.
In \cite{schwenk1973almost}, Schwenk gave a useful method to construct cospectral trees and proved his famous conclusion:``Almost all trees are not DS".
Let $T_u$ and $T_v$ be the tree $T$ rooted at vertices $u$ and $v$, respectively.
For any rooted tree $F$,  the coalescences $F\cdot T_u$ and $F\cdot T_v$, as shown as in Figure \ref{tps}, are cospectral trees \cite{van2003graphs}.
We have $N_{F\cdot T_v}(R_6)-N_{F\cdot T_u}(R_6)=d$, where $d$ is the degree of the root of $F$.
By Theorem \ref{zhuyao}, we know that $F\cdot T_u$ is not high-ordered cospectral with $F\cdot T_v$.

\begin{pro}
Cospectral trees $F\cdot T_u$ and $F\cdot T_v$ have different high-ordered spectrum.
\end{pro}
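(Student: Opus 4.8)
The plan is to prove the statement by contradiction: I will assume $F\cdot T_u$ and $F\cdot T_v$ are high-ordered cospectral and derive an impossible equality of subtree counts, using Theorem \ref{zhuyao} as the bridge between the high-ordered spectrum and the numbers $N(\widehat{T})$. The first ingredient is already classical: by Schwenk's coalescence construction \cite{schwenk1973almost} (see also \cite{van2003graphs}), the trees $F\cdot T_u$ and $F\cdot T_v$ share the same ordinary spectrum, so they cannot be separated at the level of the $2$-ordered spectrum alone; the whole point is that the high-ordered spectrum sees more. Thus the target reduces to producing one subtree $\widehat{T}$ with at most five edges on which the two coalescences disagree.

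The combinatorial core is the identity $N_{F\cdot T_v}(R_6)-N_{F\cdot T_u}(R_6)=d$, where $d$ is the degree of the root of $F$ and $R_6$ is the designated five-edge tree from the figure. To establish it I would partition the embedded copies of $R_6$ in each coalescence according to how they interact with the coalescence vertex. A copy that lies wholly inside the $F$-side, or wholly inside the $T$-side, is counted identically in $F\cdot T_u$ and $F\cdot T_v$, because $T_u$ and $T_v$ are the same underlying tree $T$ and $F$ is glued along a single vertex; all such copies cancel in the difference. Only the copies straddling the coalescence vertex, i.e. those using at least one edge from $F$ and at least one edge from $T$, can contribute, and these depend on the local structure of $T$ at $u$ versus at $v$. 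For Schwenk's specific rooted pair $T_u,T_v$ and the specific shape $R_6$, I would enumerate these straddling embeddings directly and show the net excess of $F\cdot T_v$ over $F\cdot T_u$ is exactly the number of edges incident to the root of $F$, namely $d$. This is where the degree enters: each of the $d$ root-edges of $F$ extends a fixed partial copy of $R_6$ sitting in $T$ in exactly one additional way on the $v$-side that is unavailable on the $u$-side.

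With the identity in hand the conclusion is immediate. Suppose $F\cdot T_u$ and $F\cdot T_v$ were high-ordered cospectral. Since $R_6$ has five edges, Theorem \ref{zhuyao} would force $N_{F\cdot T_u}(R_6)=N_{F\cdot T_v}(R_6)$, i.e. $d=0$; but the root of a nontrivial rooted tree $F$ has $d\geq 1$, a contradiction. Hence $F\cdot T_u$ and $F\cdot T_v$ are not high-ordered cospectral and therefore have different high-ordered spectrum, giving the proposition; note that only $d\neq 0$ is actually needed, so the exact value $d$ is a convenience rather than a necessity. The main obstacle is the explicit enumeration of the $R_6$-copies straddling the coalescence vertex and the verification that their count increases by precisely $d$ when the root is moved from $u$ to $v$; everything else is routine bookkeeping once cospectrality and Theorem \ref{zhuyao} are granted.
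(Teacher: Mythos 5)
Your proposal matches the paper's argument essentially exactly: the paper also deduces the proposition from the identity $N_{F\cdot T_v}(R_6)-N_{F\cdot T_u}(R_6)=d\geq 1$ combined with Theorem \ref{zhuyao}, which forces equality of $R_6$-counts for high-ordered cospectral trees. Note that the paper states the $R_6$ identity without proof, so your sketched enumeration of the straddling copies is, if anything, slightly more detailed than the original.
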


\begin{figure}[H]
\centering
\subfloat[$F\cdot T_u$]{
\begin{minipage}[t]{0.45\textwidth}
\centering
\includegraphics[scale=0.55]{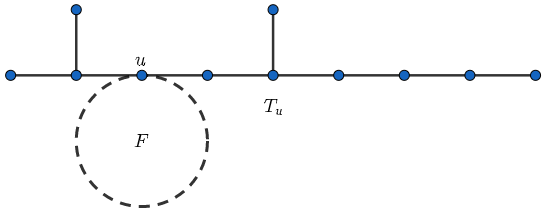}
\end{minipage}
}
\subfloat[$F\cdot T_v$]{
\begin{minipage}[t]{0.45\textwidth}
\centering
\includegraphics[scale=0.55]{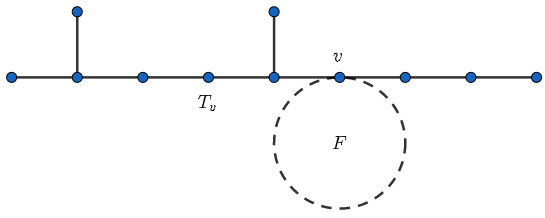}
\end{minipage}
}

\caption{The coalescences $F\cdot T_u$ and $F\cdot T_v$}\label{tps}
\end{figure}

\section{Conclusions and future work}

In this paper, we show that the high order spectrum of graphs can determine more graphs than the spectrum of graphs.
Up to now, we have not found any non-isomorphic high order cospectral graphs, that is to say, we have not found any graphs which are not DHS.
We are very interested in the existence and construction of non-isomorphic high order cospectral graphs.


The Reconstruction Conjecture of graphs is a famous conjecture in graph theory \cite{kelly1942isometric,ulam1960collection,harary1964reconstruction}.
In \cite{kelly1957congruence}, Kelly proved that ``The Reconstruction Conjecture" is true for trees.
This result is often called ``The Reconstruction Theorem for Trees".
By the Theorem \ref{zhuyao} in this paper, we know that the number of subtrees are high order cospectral invariants of trees.
From Kelly's Lemma \cite{kelly1957congruence}, we know that the number of subgraphs are reconstructible parameter of graphs.
We believe that the high order spectral characterizations of trees are closely related to the ``Reconstruction Theorem for Trees".
Based on the above introduction, we have the following conjecture.
\begin{con}
Two trees are isomorphic if and only if they have same high-ordered spectrum.
\end{con}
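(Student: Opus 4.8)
The forward implication is immediate: isomorphic trees have isomorphic power hypergraphs $G^{(k)}$ for every $k$, hence identical $k$-ordered spectra. The content is the converse, and the plan is to upgrade Theorem \ref{zhuyao} from subtrees with at most five edges to subtrees of arbitrary size, i.e. to prove that $N_{T}(\widehat{T})=N_{T^*}(\widehat{T})$ holds for \emph{every} tree $\widehat{T}$ whenever $T$ and $T^*$ are high-ordered cospectral. Granting this, isomorphism follows at once. Since high-ordered cospectral trees are cospectral, they share the same number of vertices $n$ and edges $n-1$; a subtree of $T$ with $n-1$ edges is necessarily spanning, so it equals $T$ itself. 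Thus $N_{T}(\widehat{T})=1$ exactly when $\widehat{T}\cong T$ and $0$ otherwise, and the equality of all subtree counts forces $N_{T^*}(T)=N_{T}(T)=1$, i.e. $T$ embeds as a spanning subtree of $T^*$, whence $T\cong T^*$. Alternatively one may invoke Kelly's reconstruction theorem for trees \cite{kelly1957congruence}, in the spirit of the discussion in Section 5.

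The reduction to a matrix statement is exactly the mechanism of Theorems \ref{bbldl1} and \ref{zhuyao}. For each fixed number of edges $m$, Theorem \ref{bbldl1} decouples the spectral-moment identities and yields the homogeneous linear system (\ref{fangchengzu2}) in the unknowns $h_{\widehat{T}}=N_{T}(\widehat{T})-N_{T^*}(\widehat{T})$, $\widehat{T}\in\mathfrak{T}(m)$, whose coefficient matrix has entries $c_{d_i}(\widehat{T}_j)$. It therefore suffices to show that for every $m$ one can choose even integers $d_1<\dots<d_{|\mathfrak{T}(m)|}$, all at least $2m$, making this matrix nonsingular; equivalently, that the sequences $\{\,c_\bullet(\widehat{T}):\widehat{T}\in\mathfrak{T}(m)\,\}$ are linearly independent. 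This is the general form of the finite verifications recorded in Tables \ref{4xishu} and \ref{5xishu}.

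The main obstacle is precisely this linear independence. By Lemma \ref{yinlixs}, $c_d(\widehat{T})$ is the number of closed walks of length $d$ in $\widehat{T}$ running through every edge, and its exponential growth rate as $d\to\infty$ is the spectral radius $\rho(\widehat{T})$; hence the sequences attached to subtrees with pairwise distinct spectral radii are automatically linearly independent, and the difficulty concentrates on families of subtrees sharing a common spectral radius (in particular on cospectral families). For two such subtrees $\widehat{T}$ and $\widehat{T}'$ the leading rate agrees, so one must separate them through the subleading asymptotics of $c_d$ — the multiplicative constant and lower-order terms of the all-edge walk count. I would attack this by inclusion–exclusion over edge subsets, writing $c_d(\widehat{T})=\sum_{S\subseteq E(\widehat{T})}(-1)^{|E(\widehat{T})|-|S|}\,\mathrm{S}_d(\widehat{T}_S)$, where $\widehat{T}_S$ is the spanning subforest with edge set $S$, and then studying the generating function $\sum_d c_d(\widehat{T})x^d$; the aim is to show that this rational function, a signed combination of the spectral data of all subforests of $\widehat{T}$, determines $\widehat{T}$ up to isomorphism. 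Ruling out an exact coincidence across a cospectral class is the crux, and is exactly where the statement resists a routine argument.

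Finally, it is worth recording that the case $m=n-1$ already contains the whole difficulty: there $\mathfrak{T}(n-1)$ consists of all trees on $n$ vertices, the difference vector is supported on $\{T,T^*\}$, and the system (\ref{fangchengzu2}) collapses to the single requirement $c_d(T)=c_d(T^*)$ for all even $d\ge 2(n-1)$ (compare Theorem \ref{pjxsbbl}). Thus the conjecture is equivalent to the clean assertion that two non-isomorphic cospectral trees always differ in some all-edge closed-walk count $c_d$, and it is this reformulation that I would ultimately aim to settle.
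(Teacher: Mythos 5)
The statement you are addressing is a \emph{conjecture} in the paper: the authors offer no proof, and your proposal does not close the gap either, as you yourself concede at the end of the third paragraph. Your reduction is essentially the paper's own machinery re-assembled: Theorem \ref{bbldl1} decouples the spectral moments of $T^{(k)}$ by the number of edges $m$ via the Vandermonde argument, Equation (\ref{fangchengzu2}) turns equality of subtree counts into nonsingularity of the matrix $\left(c_{d_i}(\widehat{T}_j)\right)$, and Theorem \ref{zhuyao} verifies this for $m\le 5$ by finite computation. Your observations that full subtree-count equality forces $T\cong T^*$ (via the spanning subtree $\widehat{T}=T$, or via Kelly), that $c_d$ grows like $\rho(\widehat{T})^d$ so only subtrees with a common spectral radius can fail to be separated, and that $c_d(\widehat{T})=\sum_{S\subseteq E(\widehat{T})}(-1)^{|E(\widehat{T})|-|S|}\mathrm{S}_d(\widehat{T}_S)$ by inclusion--exclusion, are all correct and genuinely useful scaffolding. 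But the crux --- proving that for every $m$ one can choose $d_1,\dots,d_{|\mathfrak{T}(m)|}$ making the coefficient matrix nonsingular, equivalently that no two non-isomorphic trees share the whole sequence $\left(c_d\right)_d$ --- is exactly the open problem, and nothing in your proposal resolves it. So this is a plan with a named missing step, not a proof; it is consistent with the paper, which likewise stops at $m\le 5$ and states the general claim as a conjecture.

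One concrete error to fix: your final ``equivalence'' overstates what you have shown. From Theorem \ref{pjxsbbl}, high-ordered cospectrality implies $c_d(T)=c_d(T^*)$ for all even $d\ge 2|E(T)|$, so the assertion ``non-isomorphic cospectral trees always differ in some $c_d$'' does \emph{imply} the conjecture. The converse does not follow: the conjecture could hold even if some non-isomorphic cospectral pair satisfied $c_d(T)=c_d(T^*)$ for all $d$, because high-ordered cospectrality additionally requires the constraints of Theorem \ref{bbldl1} for every $m<n-1$, and one of those smaller-$m$ invariants could be the one that separates the pair. In other words, restricting to $m=n-1$ discards information rather than concentrating the difficulty; your reformulation is a sufficient condition, and possibly a strictly harder target than the conjecture itself.
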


\section*{References}
\bibliographystyle{plain}
\bibliography{newHDSbib}
\end{spacing}
\end{document}